\newtheorem{theorem}{Theorem}[section]
\newtheorem{lemma}[theorem]{Lemma}
\newtheorem{proposition}[theorem]{Proposition}
\newtheorem{corollary}[theorem]{Corollary}
\theoremstyle{definition}
\newtheorem{example}[theorem]{Example}
\newtheorem{remark}[theorem]{Remark}
\newtheorem{algorithm}[theorem]{Algorithm}
\begin{document}

\title[Computing with rational symmetric functions]
{Computing with rational symmetric functions \\
and applications to invariant theory\\
and PI-algebras}

\author[F. Benanti, S. Boumova, V. Drensky, G. Genov, and P. Koev]
{Francesca Benanti, Silvia Boumova, Vesselin Drensky,\\
Georgi K. Genov, and Plamen Koev}

\address{Francesca Benanti: Dipartimento di Matematica ed Applicazioni, Universit\`a di Palermo,
Via Archirafi 34, 90123 Palermo, Italy}
\email{fbenanti@math.unipa.it}
\address{Silvia Boumova: Higher School of Civil Engineering ``Lyuben Karavelov'',
175 Suhodolska Str., 1373 Sofia, Bulgaria, and
Institute of Mathematics and Informatics,
Bulgarian Academy of Sciences,
1113 Sofia, Bulgaria}
\email{silvi@math.bas.bg}
\address{Vesselin Drensky and Georgi K. Genov: Institute of Mathematics and Informatics,
Bulgarian Academy of Sciences,
1113 Sofia, Bulgaria}
\email{drensky@math.bas.bg, guenovg@mail.bg}
\address{Plamen Koev: Department of Mathematics,
San Jos\'e State University, San Jose, CA 95192-0103, U.S.A.}
\email{plamen.koev@sjsu.edu}

\thanks
{The research of the first named author was partially supported by INdAM}
\thanks
{The research of the fifth named author was partially supported by NSF Grant DMS-1016086.}

\subjclass[2010]
{05A15; 05E05; 05E10; 13A50; 15A72; 16R10; 16R30; 20G05.}
\keywords{Rational symmetric functions, MacMahon partition analysis, Hilbert series,
classical invariant theory, noncommutative invariant theory,
algebras with polynomial identity, cocharacter sequence.}

\maketitle

\centerline{\it Dedicated to Yuri Bahturin on the occasion of his 65th birthday}

\begin{abstract}
Let $K$ be a field of any characteristic. Let the formal power series
\[
f(x_1,\ldots,x_d)=\sum \alpha_nx_1^{n_1}\cdots x_d^{n_d}
=\sum m(\lambda)S_{\lambda}(x_1,\ldots,x_d),\quad \alpha_n,m(\lambda)\in K,
\]
be a symmetric function decomposed as a series of Schur functions.
When $f$ is a rational function whose
denominator is a product of binomials of the form $1-x_1^{a_1}\cdots x_d^{a_d}$,
we use a classical combinatorial method of Elliott of 1903
further developed in the $\Omega$-calculus (or Partition Analysis)
of MacMahon in 1916 to compute the generating function
\[
M(f;x_1,\ldots,x_d)=\sum m(\lambda)x_1^{\lambda_1}\cdots x_d^{\lambda_d},\quad \lambda=(\lambda_1,\ldots,\lambda_d).
\]
$M$ is a rational function with denominator of a similar form as $f$.
We apply the method to several problems on symmetric algebras, as well as problems in classical
invariant theory, algebras with polynomial identities, and noncommutative invariant theory.
\end{abstract}

%\maketitle

\section*{Introduction}

Let $K$ be a field of any characteristic and let $K[[X]]^{S_d}$
be the subalgebra of the symmetric functions in the algebra of formal power series
$K[[X]]=K[[x_1,\ldots,x_d]]$ in the set of variables $X=\{x_1,\ldots,x_d\}$.
We study series $f(X)\in K[[X]]^{S_d}$
which can be represented as rational functions whose denominators are products
of binomials of the form $1-X^a=1-x_1^{a_1}\cdots x_d^{a_d}$.
Following Berele \cite{B2}, we call such functions {\it nice rational symmetric functions}.
Those functions appear in many places in mathematics.
In the examples that have inspired our project, $K$ is of characteristic 0:

If $W$ is a polynomial module of the general linear group $GL_d=GL_d(K)$, then
its $GL_d$-character is a symmetric polynomial, which in turn gives
$W$ the structure of a graded
vector space. Hence the Hilbert (or Poincar\'e) series of the symmetric algebra $K[W]$
is a nice rational symmetric function.

Nice rational symmetric functions appear as Hilbert series in classical invariant theory. For example,
this holds for the Hilbert series of the pure trace algebra of $n\times n$ generic matrices which is the algebra
of invariants of $GL_n$ acting by simultaneous conjugation on several $n\times n$ matrices.
The mixed trace algebra also has a meaning in classical invariant theory and has a Hilbert series which
is a nice rational symmetric function.

The theorem of Belov \cite{Be} gives that for any
PI-algebra $R$ the Hilbert series of the relatively free algebra
$K\langle Y\rangle/T(R)$, $Y=\{y_1,\ldots,y_d\}$,
where $T(R)$ is the T-ideal of the polynomial identities in $d$ variables of $R$,
is a rational function. Berele \cite{B2} established that the proof of Belov
(as presented in the book by Kanel-Belov and Rowen \cite{KBR}) also implies
that this Hilbert series is a nice rational symmetric function.

Every symmetric function $f(X)$ can be presented as a formal series
\[
f(X)=\sum_{\lambda}m(\lambda)S_{\lambda}(X),
\quad m(\lambda)\in K,
\]
where $S_{\lambda}(X)=S_{\lambda}(x_1,\ldots,x_d)$ is the Schur function
indexed with the partition $\lambda=(\lambda_1,\ldots,\lambda_d)$.

Clearly, it is an interesting combinatorial problem to find the multiplicities
$m(\lambda)$ of an explicitly given symmetric function $f(X)$. This problem is naturally related
to the representation theory of $GL_d$ in characteristic 0 because the Schur functions are
the characters of the irreducible polynomial representations of $GL_d$. Another motivation is that
the multiplicities of the Schur functions
in the Hilbert series of the relatively free
algebra $K\langle Y\rangle/T(R)$, $\text{char}K=0$,
are equal to the multiplicities in the (multilinear) cocharacter sequence
of the polynomial identities of $R$.

Drensky and Genov \cite{DG1} introduced the {\it multiplicity series} $M(f;X)$ of
$f(X)\in K[[X]]^{S_d}$. If
\[
f(X)=\sum_{n_i\geq 0}\alpha(n)X^n=\sum_{\lambda}m(\lambda)S_{\lambda}(X),
\quad m(\lambda)\in K,
\]
then
\[
M(f;X)=\sum_{\lambda}m(\lambda)X^{\lambda}
=\sum_{\lambda_i\geq\lambda_{i+1}}m(\lambda)x_1^{\lambda_1}\cdots x_d^{\lambda_d}
\in K[[X]]
\]
is the generating function of the multiplicities $m(\lambda)$.
Berele, in \cite{B3}, (and also not explicitly stated in \cite{B2}) showed that the multiplicity
series of a nice rational symmetric function $f(X)$ is also a nice rational function.
This fact was one of the key moments in the recent theorem about the exact asymptotics
\[
c_n(R)\simeq an^{k/2}b^n,\quad a\in{\mathbb R},\quad k,b\in {\mathbb N},
\]
of the codimension sequence $c_n(R)$, $n=0,1,2,\ldots$,
of a unital PI-algebra $R$ in characteristic 0
(Berele and Regev \cite{BR} for finitely generated algebras and Berele \cite{B3}
in the general case).
Unfortunately, the proof of Berele does not yield an algorithm to compute the
multiplicity series of $f(X)$. In two variables,  Drensky and Genov \cite{DG2} developed methods
to compute the multiplicity series for nice rational symmetric functions.

The approach of Berele \cite{B2, B3} involves classical results on generating functions
of nonnegative solutions of systems of linear homogeneous equations, obtained by Elliott \cite{E}
and MacMahon \cite{MM}, as stated in the paper by Stanley \cite{S1}.
Going back to the originals \cite{E} and \cite{MM}, we see that
the results there provide algorithms to compute the multiplicity series for nice rational
symmetric functions in any number of variables. The method of Elliott \cite{E} was further developed
by MacMahon \cite{MM} in his ``$\Omega$-Calculus'' or Partition Analysis.
The ``$\Omega$-Calculus'' was improved, with
computer realizations, see Andrews, Paule, and Riese \cite{APR1, APR2}, and Xin \cite{X}. The series of
twelve papers on MacMahon's partition analysis by Andrews, alone or jointly with Paule, Riese, and Strehl (I -- \cite{A}$,\ldots,$
XII -- \cite{AP}) gave a new life of the methods, with numerous applications to different problems.
It seems that for the moment the original approach of \cite{E, MM} and its further developments have not been used
very efficiently in invariant theory and theory of PI-algebras. The only results in this direction we are aware of are in the recent paper
by Bedratyuk and Xin \cite{BX}.

Our computations are based on the ideas of Xin \cite{X} and have been performed
with standard functions of Maple on a usual personal computer.
We illustrate the methods on several problems on symmetric algebras, in classical
invariant theory, algebras with polynomial identities, and noncommutative invariant theory.
The results of Section \ref{section for reduction to MacMahon} hold for any field $K$ of arbitrary characteristic.
In the other sections we assume that $K$ is of characteristic 0.

\section{Reduction to MacMahon's partition analysis}\label{section for reduction to MacMahon}

Recall that one of the ways to define Schur functions (e.g.,  Macdonald \cite{M})
is as fractions of Vandermonde type determinants
\[
S_{\lambda}(X)=\frac{V(\lambda+\delta,X)}{V(\delta,X)},
\]
where $\lambda=(\lambda_1,\ldots,\lambda_d)$,
$\delta=(d-1,d-2,\ldots,2,1,0)$, and
\[
V(\mu,X)=\left\vert\begin{matrix}
x_1^{\mu_1}&x_2^{\mu_1}&\cdots&x_d^{\mu_1}\\
&&&\\
x_1^{\mu_2}&x_2^{\mu_2}&\cdots&x_d^{\mu_2}\\
&&&\\
\vdots&\vdots&\ddots&\vdots\\
&&&\\
x_1^{\mu_d}&x_2^{\mu_d}&\cdots&x_d^{\mu_d}\\
\end{matrix}\right\vert, \quad \mu=(\mu_1,\ldots,\mu_d).
\]
If $f(X)\in K[[X]]^{S_d}$ is a symmetric function, it can be presented
in a unique way as
\[
f(X)=\sum_{\lambda}m(\lambda)S_{\lambda}(X),
\]
where the ``$\lambda$-coordinate'' $m(\lambda)\in K$ is called
the {\it multiplicity} of $S_{\lambda}(X)$.
Our efforts are concentrated around the problem:
{\it Given $f(X)\in K[[X]]^{S_d}$, find the multiplicity series
\[
M(f;X)=\sum_{\lambda}m(\lambda)X^{\lambda}
=\sum_{\lambda_i\geq\lambda_{i+1}}m(\lambda)x_1^{\lambda_1}\cdots x_d^{\lambda_d}
\in K[[X]]
\]
and the multiplicities $m(\lambda)$.}
It is convenient to introduce new variables
\[
v_1=x_1,v_2=x_1x_2,\ldots,v_d=x_1\cdots x_d
\]
and to consider the algebra of formal power series $K[[V]]=K[[v_1,\ldots,v_d]]$
as a subalgebra of $K[[X]]$. As in \cite{DG1}, we introduce the function $M'(f;V)$
(also called the multiplicity series of $f(X)$) by
\[
M'(f;V)=M(f;v_1,v_1^{-1}v_2,\ldots,v_{d-1}^{-1}v_d)
=\sum_{\lambda}m(\lambda)v_1^{\lambda_1-\lambda_2}\cdots v_{d-1}^{\lambda_{d-1}-\lambda_d}v_d^{\lambda_d}.
\]
The mapping $M':K[[X]]^{S_d}\to K[[V]]$ defined by $M':f(X)\to M'(f;V)$ is a bijection.

The proof of the following easy lemma is given in \cite{B2}.

\begin{lemma}\label{lemma of Berele}
Let $f(X)\in K[[X]]^{S_d}$ be a symmetric function and let
\[
g(X)=f(X)\prod_{i<j}(x_i-x_j)=\sum_{r_i\geq 0}\alpha(r_1,\ldots,r_d)x_1^{r_1}\cdots x_d^{r_d},
\quad \alpha(r_1,\ldots,r_d)\in K.
\]
Then the multiplicity series of $f(X)$ is given by
\[
M(f;X)=\frac{1}{x_1^{d-1}x_2^{d-2}\cdots x_{d-2}^2x_{d-1}}
\sum_{r_i>r_{i+1}}\alpha(r_1,\ldots,r_d)x_1^{r_1}\cdots x_d^{r_d},
\]
where the summation is over all $r=(r_1,\ldots,r_d)$ such that
$r_1>r_2>\cdots>r_d$.
\end{lemma}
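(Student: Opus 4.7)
The plan is to start from the expansion $f(X) = \sum_\lambda m(\lambda) S_\lambda(X)$ and multiply both sides by the Vandermonde $V(\delta, X) = \prod_{i<j}(x_i - x_j)$. Using the defining formula $S_\lambda(X) = V(\lambda+\delta, X)/V(\delta, X)$ quoted just before the lemma, this immediately gives
\[
g(X) = f(X)V(\delta, X) = \sum_\lambda m(\lambda)\, V(\lambda+\delta, X).
\]
The right-hand side is a sum of alternating expressions, so the idea is to identify, in the monomial expansion of $g(X)$, exactly the monomials $x_1^{r_1}\cdots x_d^{r_d}$ with $r_1 > r_2 > \cdots > r_d$ and show that these contribute precisely the shifted multiplicity series.

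Next I would expand the determinant $V(\lambda+\delta, X) = \sum_{\sigma \in S_d} \mathrm{sgn}(\sigma)\, x_{\sigma(1)}^{\lambda_1+d-1}\cdots x_{\sigma(d)}^{\lambda_d}$. Setting $\mu_i = \lambda_i + d - i$, the tuple $\mu$ is strictly decreasing whenever $\lambda$ is a partition, and conversely every strictly decreasing tuple $r_1 > \cdots > r_d \geq 0$ arises uniquely from some partition $\lambda$ with $\lambda_i = r_i - (d-i)$. Comparing coefficients, the monomial $x_1^{r_1}\cdots x_d^{r_d}$ with strictly decreasing exponents appears in only one term of the sum over $\lambda$ and only for the identity permutation $\sigma = \mathrm{id}$ (since matching a strictly decreasing tuple $r$ with a strictly decreasing tuple $\mu$ forces $\sigma = \mathrm{id}$). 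Hence $\alpha(r_1,\ldots,r_d) = m(\lambda)$ exactly when $r_i = \lambda_i + d - i$.

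Collecting these contributions yields
\[
\sum_{r_1 > \cdots > r_d} \alpha(r_1,\ldots,r_d)\, x_1^{r_1}\cdots x_d^{r_d}
= \sum_\lambda m(\lambda)\, x_1^{\lambda_1+d-1} x_2^{\lambda_2+d-2}\cdots x_{d-1}^{\lambda_{d-1}+1} x_d^{\lambda_d}.
\]
Dividing both sides by the shift monomial $x_1^{d-1}x_2^{d-2}\cdots x_{d-1}$ produces $\sum_\lambda m(\lambda) x_1^{\lambda_1}\cdots x_d^{\lambda_d} = M(f;X)$, which is the claimed formula.

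The computation is essentially bookkeeping; there is no real obstacle. The only place to be careful is the correspondence between monomials and permutations in the determinant expansion — making sure that the uniqueness of $\sigma = \mathrm{id}$ when both $r$ and $\mu$ are strictly decreasing is invoked correctly, so that the coefficient $\alpha$ truly equals $m(\lambda)$ (with no sign issues from $\mathrm{sgn}(\sigma)$) on the strictly decreasing locus.
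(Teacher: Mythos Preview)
Your proof is correct. The paper itself does not give a proof of this lemma; it simply remarks that the lemma is easy and cites Berele \cite{B2} for the argument. Your write-up is exactly the standard proof one would expect: multiply the Schur expansion of $f$ by the Vandermonde, use $S_\lambda \cdot V(\delta,X) = V(\lambda+\delta,X)$, expand each determinant as an alternating sum over $S_d$, and observe that on the locus of strictly decreasing exponent tuples only the identity permutation survives, giving $\alpha(r_1,\ldots,r_d)=m(\lambda)$ with $r_i=\lambda_i+d-i$. Dividing by the shift monomial $x_1^{d-1}\cdots x_{d-1}$ recovers $M(f;X)$. There are no gaps; the only delicate point --- that a strictly decreasing $r$ can match a strictly decreasing $\mu=\lambda+\delta$ only via $\sigma=\mathrm{id}$, and only for one $\lambda$ --- you have stated clearly.
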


By the previous lemma, given a nice rational function
\[
g(X)=\sum_{r_i\geq 0}\alpha(r)X^r=p(X)\prod\frac{1}{(1-X^a)^{b_a}}
\]
we start by computing ``half'' of it, i.e., the infinite sum of $\alpha(r)x_1^{r_1}x_2^{r_2}\cdots x_d^{r_d}$
for $r_1>r_2$, and then we continue in the same way with the other variables. To illustrate the method of
Elliott \cite{E}, it is sufficient to consider the case of two variables only.
Given the series
\[
g(x_1,x_2)=\sum_{i,j\geq 0}\alpha_{ij}x_1^ix_2^j
\]
we introduce a new variable $z$ and consider the Laurent series
\[
g(x_1z,\frac{x_2}{z})=\sum_{i,j\geq 0}\alpha_{ij}x_1^ix_2^jz^{i-j}
=\sum_{n=-\infty}^{\infty}g_n(x_1,x_2)z^n,\quad g_n(x_1,x_2)\in K[[x_1,x_2]].
\]
We want to present $g(x_1z,x_2/z)$ as a sum of two series, one in $z$ and the other in $1/z$:
\[
g(x_1z,\frac{x_2}{z})=\sum_{n\geq 0}g_n(x_1,x_2)z^n+\sum_{n>0}g_{-n}(x_1,x_2)\left(\frac{1}{z}\right)^n,
\]
and then take the first summand and replace $z$ with 1 there. If $g(x_1,x_2)$ is a nice rational function,
then $g(x_1,x_2)$ and $g(x_1z,x_2/z)$ have the form
\[
g(x_1,x_2)=p(x_1,x_2)\prod\frac{1}{1-x_1^ax_2^b},\quad p(x_1,x_2)\in K[x_1,x_2],
\]
\[
g(x_1z,\frac{x_2}{z})=p(x_1z,\frac{x_2}{z})\prod\frac{1}{1-x_1^ax_2^bz^{a-b}}.
\]
The expression $\prod 1/(1-x_1^ax_2^bz^{a-b})$ is a product of three factors
\[
\prod_{a_0=b_0}\frac{1}{1-x_1^{a_0}x_2^{b_0}},\quad
\prod_{a_1>b_1}\frac{1}{1-x_1^{a_1}x_2^{b_1}z^{a_1-b_1}},
\quad \prod_{a_2<b_2}\frac{1}{1-x_1^{a_2}x_2^{b_2}/z^{b_2-a_2}}.
\]
If $\prod 1/(1-x_1^ax_2^bz^{a-b})$ contains factors of both the second and the third type,
Elliott \cite{E} suggests to apply the equality
\[
\frac{1}{(1-Az^a)(1-B/z^b)}=\frac{1}{1-ABz^{a-b}}\left(\frac{1}{1-Az^a}+\frac{1}{1-B/z^b}-1\right)
\]
to one of the expressions $1/(1-x_1^{a_1}x_2^{b_1}z^{a_1-b_1})(1-x_1^{a_2}x_2^{b_2}/z^{b_2-a_2})$
and to represent $\prod 1/(1-x_1^ax_2^bz^{a-b})$ as a sum of three expressions which are simpler than the original one.
Continuing in this way, one represents $\prod 1/(1-x_1^ax_2^bz^{a-b})$ as a sum of products of two types:
\[
\prod_{a\geq b}\frac{1}{1-x_1^ax_2^bz^{a-b}}\quad\text{\rm and}\quad
\prod_{a_0=b_0}\frac{1}{1-x_1^{a_0}x_2^{b_0}}\prod_{a_2<b_2}\frac{1}{1-x_1^{a_2}x_2^{b_2}/z^{b_2-a_2}}.
\]
With some additional easy arguments we can represent $g(x_1z,x_2/z)$ as a linear combination of monomials $A_1z^i$,
$i\geq 0$, and quotients of the form
\[
\frac{A_2}{z^j},j>0,\quad B_1z^i\prod_{b\geq 0}\frac{1}{1-B_2z^b},i\geq 0,\quad
\frac{C_1}{z^j}\prod\frac{1}{1-C_2}\prod_{c>0}\frac{1}{1-C_3/z^c}, j\geq 0,
\]
with coefficients $A_1,A_2,B_1,B_2,C_1,C_2,C_3$ which are monomials in $x_1,x_2$.
Comparing this form of $g(x_1z,x_2/z)$ with its expansion as a Laurent series in $z$
\[
g(x_1z,x_2/z)=\sum_{n=-\infty}^{\infty}g_n(x_1,x_2)z^n,
\]
we obtain that the part $\sum_{n\geq 0}g_n(x_1,x_2)z^n$ which we want to compute is the sum of
$A_1z^i$, $B_1z^i\prod_{b\geq 0}1/(1-Bz^b)$ and the fractions $C_1/z^j\prod 1/(1-C_2)$ with $j=0$.

Generalizing the idea of Elliott, in his famous book \cite{MM}
MacMahon defined operators $\mathop{\Omega}\limits_{\geq}$ and $\mathop{\Omega}\limits_{=0}$.
The first operator cuts the negative powers of a Laurent formal power series and then replaces $z$ with 1:
\[
\mathop{\Omega}\limits_{\geq}:\sum_{n_i=-\infty}^{+\infty}\alpha(n)Z^n\to
\sum_{n_i=0}^{+\infty}\alpha(n),
\]
and the second one takes the constant term of series
\[
\mathop{\Omega}\limits_{=0}:\sum_{n_i=-\infty}^{+\infty}\alpha(n)Z^n\to \alpha(0),
\]
where $\alpha(n)=\alpha(n_1,\ldots,n_d)\in K[[X]]$, $\alpha(0)=\alpha(0,\ldots,0)$ and $Z^n=z_1^{n_1}\cdots z_d^{n_d}$.

The next theorem presents the multiplicity series of an arbitrary symmetric function in terms of the Partition
Analysis of MacMahon.

\begin{theorem}\label{Omega version of multiplicity series}
Let $f(X)\in K[[X]]^{S_d}$ be a symmetric function in $d$ variables and let
\[
g(X)=f(X)\prod_{i<j}(x_i-x_j).
\]
Then the multiplicity series of $f(X)$ satisfies
\[
M(f;X)=\frac{1}{x_1^{d-1}x_2^{d-2}\cdots x_{d-2}^2x_{d-1}}
\mathop{\Omega}\limits_{\geq}\left(g(x_1z_1,x_2z_1^{-1}z_2\cdots x_{d-1}z_{d-2}^{-1}z_{d-1},x_dz_{d-1}^{-1})\right).
\]
\end{theorem}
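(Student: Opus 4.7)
The plan is to derive the formula directly from Lemma \ref{lemma of Berele} by recognizing the constrained sum that appears there as the output of the $\mathop{\Omega}\limits_{\geq}$ operator applied to a suitably $z$-substituted version of $g$. The pivotal observation is that the strict-inequality constraint $r_1>r_2>\cdots>r_d$ can be relaxed to a weak one without changing the sum, because $g(X)=f(X)\prod_{i<j}(x_i-x_j)$ is antisymmetric.

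First, I would write the substitution more explicitly. Setting $z_0=z_d=1$ by convention, the substitution $x_i\mapsto x_i z_{i-1}^{-1}z_i$ transforms a monomial $x_1^{r_1}\cdots x_d^{r_d}$ into
\[
x_1^{r_1}\cdots x_d^{r_d}\cdot z_1^{r_1-r_2}z_2^{r_2-r_3}\cdots z_{d-1}^{r_{d-1}-r_d},
\]
since the exponent of $z_i$ picks up a $+r_i$ from the substitution in $x_i$ and a $-r_{i+1}$ from the substitution in $x_{i+1}$. Consequently, if $g(X)=\sum_{r_i\geq 0}\alpha(r)X^r$, then after substitution we obtain a Laurent series in the $z_j$ whose $(n_1,\ldots,n_{d-1})$-coefficient in $K[[X]]$ encodes precisely those $\alpha(r)X^r$ with $r_i-r_{i+1}=n_i$ for all $i$. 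Applying $\mathop{\Omega}\limits_{\geq}$ keeps only the monomials with all $n_i\geq 0$ and sets each $z_j=1$, producing
\[
\mathop{\Omega}\limits_{\geq}\,g(x_1z_1,x_2z_1^{-1}z_2,\ldots,x_dz_{d-1}^{-1})=\sum_{r_1\geq r_2\geq\cdots\geq r_d}\alpha(r_1,\ldots,r_d)x_1^{r_1}\cdots x_d^{r_d}.
\]

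Next I would bridge the gap between this weak-inequality sum and the strict-inequality sum appearing in Lemma \ref{lemma of Berele}. Since $f(X)$ is symmetric and $\prod_{i<j}(x_i-x_j)$ is alternating, $g(X)$ is alternating, meaning $\alpha(r_{\sigma(1)},\ldots,r_{\sigma(d)})=\mathrm{sgn}(\sigma)\alpha(r_1,\ldots,r_d)$ for every permutation $\sigma$. Taking $\sigma$ to be any transposition fixing a tuple $r$ with $r_i=r_j$ for some $i\ne j$ yields $\alpha(r)=-\alpha(r)=0$ (this holds over any field, as the relation $2\alpha(r)=0$ in the offending characteristic-$2$ case still follows from the polynomial identity at the level of $g$, whose coefficients vanish for tuples with repeated entries by the very factorization through $\prod(x_i-x_j)$). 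Hence the weak and strict sums coincide:
\[
\sum_{r_1\geq\cdots\geq r_d}\alpha(r)x^r=\sum_{r_1>\cdots>r_d}\alpha(r)x^r.
\]
Substituting this equality into the conclusion of Lemma \ref{lemma of Berele} and dividing by $x_1^{d-1}x_2^{d-2}\cdots x_{d-1}$ gives the claimed formula.

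The only real obstacle is the vanishing-on-the-diagonal step, and I would state it carefully so the argument works in arbitrary characteristic: rather than invoking $\alpha(r)=-\alpha(r)$, observe that $g(X)$ lies in the ideal generated by the differences $x_i-x_j$, so its coefficient of any monomial $x^r$ with a repeated exponent is necessarily zero. Everything else is bookkeeping about the exponents of the auxiliary variables $z_j$ and the definition of $\mathop{\Omega}\limits_{\geq}$.
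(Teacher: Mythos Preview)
Your proof is correct and follows the same route as the paper's own argument: compute the effect of the substitution $x_i\mapsto x_iz_{i-1}^{-1}z_i$ on monomials, apply $\mathop{\Omega}\limits_{\geq}$ to obtain the weakly-ordered sum, use skew-symmetry of $g$ to replace weak by strict inequalities, and invoke Lemma~\ref{lemma of Berele}. Your attention to characteristic $2$ goes beyond the paper (which simply asserts the vanishing from skew-symmetry), though your stated justification---membership in the ideal generated by $x_i-x_j$---does not by itself force individual coefficients to vanish; the clean characteristic-free argument is to expand $V=\sum_{\sigma}\mathrm{sgn}(\sigma)x^{\sigma(\delta)}$, pair $\sigma$ with $(i\,j)\sigma$, and use the symmetry of $f$ to see that the two contributions to $\alpha(r)$ cancel in any ring.
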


\begin{proof}
Let
\[
g(X)=\sum_{r_i\geq 0}\alpha(r)X^r,
\quad \alpha(r)\in K, X^r=x_1^{r_1}\cdots x_d^{r_d}.
\]
Then
\[
g(x_1z_1,x_2z_1^{-1}z_2\cdots x_{d-1}z_{d-2}^{-1}z_{d-1},x_dz_{d-1}^{-1})
=\sum_{r_i\geq 0}\alpha(r)X^rz_1^{r_1-r_2}z_2^{r_2-r_3}\cdots z_{d-1}^{r_{d-1}-r_d},
\]
\[
\mathop{\Omega}\limits_{\geq}\left(g(x_1z_1,x_2z_1^{-1}z_2\cdots x_{d-1}z_{d-2}^{-1}z_{d-1},x_dz_{d-1}^{-1})\right)
=\sum_{r_i\geq r_{i+1}}\alpha(r)X^r.
\]
The function $g(X)$ is skew-symmetric because $f(X)$ is symmetric. Hence
$\alpha(r)$ is equal to 0, if $r_i=r_j$ for some $i\not= j$ and the summation
in the latter equality for $\mathop{\Omega}\limits_{\geq}$
runs on $r_1>\cdots >r_d$ (and not on $r_1\geq\cdots\geq r_d$). Now the proof follows immediately from
Lemma \ref{lemma of Berele}.
\end{proof}

The $\Omega$-operators were applied by MacMahon \cite{MM} to Elliott rational functions which share many properties with nice rational
functions. He used the Elliott reduction process described above.
The computational approach developed by Andrews, Paule, and Riese \cite{APR1, APR2}
is based on improving this reduction process.
There is another algorithm due to Xin \cite{X} which involves partial fractions.
In this paper we shall use an algorithm inspired by the algorithm of Xin \cite{X}.
We shall state it in the case of two variables. The case of nice rational symmetric functions in several
variables is obtained in an obvious way by multiple application of the algorithm
to the function $g(X)=f(X)\prod_{i<j}(x_i-x_j)$ in $d$ variables instead of to the function
$g(x_1,x_2)=f(x_1,x_2)(x_1-x_2)$ in two variables.

\begin{algorithm}\label{algorithm of Xin}
Let $g(x_1,x_2)\in K[[x_1,x_2]]$ be a nice rational function. In $g(x_1z,x_2/z)$ we replace the factors
$1/(1-C/z^c)$, where $C$ is a monomial in $x_1,x_2$, with the factor $z^c/(z^c-C)$.
Then $g(x_1z,x_2/z)$ becomes a rational function of the form
\[
g(x_1z,\frac{x_2}{z})=\frac{p(z)}{z^a}\prod\frac{1}{1-A}\prod\frac{1}{1-Bz^b}\prod\frac{1}{z^c-C},
\]
where $p(z)$ is a polynomial in $z$ with coefficients which are rational functions in $x_1,x_2$ and $A,B,C$ are monomials in $x_1,x_2$.
Presenting $g(x_1z,x_2/z)$ as a sum of partial fractions with respect to $z$ we obtain that
\[
g(x_1z,\frac{x_2}{z})=p_0(z)+\sum\frac{p_i}{z^i}+\sum\frac{r_{jk}(z)}{q_j(z)^k},
\]
where $p_0(z),r_{jk}(z),q_j(z)\in K(x_1,x_2)[z]$, $p_i\in K(x_1,x_2)$, $q_j(z)$ are the irreducible factors over $K(x_1,x_2)$ of
the binomials $1-Bz^b$ and $z^c-C$ in the expression of $g(x_1z,x_2/z)$, and $\deg_zr_{jk}(z)<\deg_zq_j(z)$. Clearly
$p_0(z)$ gives a contribution to the series $\sum_{n\geq 0}g_n(x_1,x_2)z^n$ in the expansion of $g(x_1z,x_2/z)$ as a
Laurent series. Similarly, $r_{jk}(z)/q_j(z)^k$ contributes to the same series for the factors $q_j(z)$ of
$1-Bz^b$. The fraction $p_i/z^i$ is a part of the series $\sum_{n>0}g_n(x_1,x_2)/z^n$. When $q_j(z)$ is a factor of
$z^c-C$, we obtain that $q_j(z)=z^dq_j'(1/z)$, where $d=\deg_zq_j(z)$ and $q_j'(\zeta)\in K(x_1,x_2)[\zeta]$ is a divisor of
$1-C\zeta^c$. Since $\deg_zr_{jk}(z)<\deg_zq_j(z)$ we derive that $r_{jk}(z)/q_j(z)^k$ contributes to
$\sum_{n>0}g_n(x_1,x_2)/z^n$ and does not give any contribution to $\sum_{n\geq 0}g_n(x_1,x_2)z^n$. Hence
\[
\sum_{n\geq 0}g_n(x_1,x_2)z^n=p_0(z)+\sum\frac{r_{jk}(z)}{q_j(z)^k},
\]
where the sum in the right side of the equation runs on the irreducible divisors $q_j(z)$ of the factors $1-Bz^b$
of the denominator of $g(x_1z,x_2/z)$.
Substituting 1 for $z$ we obtain the expression for $\mathop{\Omega}\limits_{\geq}(g(x_1z,x_2/z))$.
\end{algorithm}

\begin{proof}
The process described in the algorithm gives that
\[
\mathop{\Omega}\limits_{\geq}\left(g(x_1z,\frac{x_2}{z})\right)
=\frac{P(x_1,x_2)}{Q(x_1,x_2)},\quad P(x_1,x_2),Q(x_1,x_2)\in K[x_1,x_2].
\]
By the elimination process of Elliott we already know that $\mathop{\Omega}\limits_{\geq}(g(x_1z,x_2/z))$ is a nice rational function.
Hence the polynomial $Q(x_1,x_2)$ is a divisor of a product of binomials $1-X^a$. Hence the output is in a form
that allows, starting with a nice rational symmetric function $f(X)$ in $d$ variables,
to continue the process with the other variables and to compute the multiplicity series $M(f;X)$.
\end{proof}

\begin{remark}\label{how to check multiplicity series}
If $f(X)$ is a nice rational symmetric function, we can find the multiplicity series
$M(f;X)$ applying Lemma \ref{lemma of Berele} and
using the above algorithm. On the other hand,
it is very easy to check whether the formal power series
\[
h(X)=\sum \beta(q)X^q,\quad
q_1\geq\cdots\geq q_d,
\]
is equal to the multiplicity series $M(f;X)$ of $f(X)$. This is because
$h(X)=M(f;X)$ if and only if
\[
f(X)\prod_{i<j}(x_i-x_j)=\sum_{\sigma\in S_d}\text{\rm sign}(\sigma)
x_{\sigma(1)}^{d-1}x_{\sigma(2)}^{d-2}\cdots x_{\sigma(d-1)}
h(x_{\sigma(1)},\ldots,x_{\sigma(d)}).
\]
These arguments can be used to verify most of our computational results on multiplicities.
\end{remark}

\section{Symmetric algebras}\label{section on symmetric algebras}

Till the end of the paper we assume that $K$ is a field of characteristic 0.
For a background on the representation theory of $GL_d=GL_d(K)$ in the level we need see
the book by Macdonald \cite{M} or the paper by Almkvist, Dicks and Formanek \cite{ADF}.
We fix is a polynomial $GL_d$-module $W$.
Then $W$ is a direct sum
of its irreducible components $W(\mu)$, where $\mu=(\mu_1,\ldots,\mu_d)$ is a partition
in not more than $d$ parts,
\[
W=\bigoplus_{\mu}k(\mu)W(\mu),
\]
where the nonnegative integer $k(\mu)$ is the multiplicity of $W(\mu)$ in the decomposition of $W$.
The vector space $W$ has a basis of eigenvectors of the diagonal subgroup $D_d$ of $GL_d$
an we fix such a basis $\{w_1,\ldots,w_p\}$:
\[
g(w_j)=\xi_1^{\alpha_{j1}}\cdots\xi_d^{\alpha_{jd}}w_j,\quad g=\text{diag}(\xi_1,\ldots,\xi_d)\in D_d,
j=1,\ldots,p,
\]
where $\alpha_{ji}$, $i=1,\ldots,d$, $j=1,\ldots,p$, are nonnegative integers.
The action of $D_d$ on $W$ induces a ${\mathbb Z}^d$-grading on $W$ assuming that
\[
\deg(w_j)=(\alpha_{j1},\ldots,\alpha_{jd}),\quad j=1,\ldots,p.
\]
The polynomial
\[
H(W;X)=\sum_{j=1}^pX^{\alpha_j}=\sum_{j=1}^px_1^{\alpha_{j1}}\cdots x_d^{\alpha_{jd}}
\]
is the Hilbert series of $W$ and has the form
\[
H(W;X)=\sum_{\mu}k(\mu)S_{\mu}(X).
\]
It plays the role of the character of the $GL_d$-module $W$. If the eigenvalues of $g\in GL_d$
are equal to $\zeta_1,\ldots,\zeta_d$, then
\[
\chi_W(g)=\text{tr}_W(g)=H(W;\zeta_1,\ldots,\zeta_d)=\sum_{\mu}k(\mu)S_{\mu}(\zeta_1,\ldots,\zeta_d).
\]
We identify the symmetric algebra $K[W]$ of $W$ with the polynomial algebra in the variables
$w_1,\ldots,w_p$. We extend diagonally the action of $GL_d$ to the symmetric algebra $K[W]$ of $W$ by
\[
g(f(w))=f(g(w)),\quad g\in GL_d, f\in K[W], w\in W.
\]
The ${\mathbb Z}^d$-grading of $W$ induces a ${\mathbb Z}^d$-grading of $K[W]$ and the Hilbert series of $K[W]$
\[
H(K[W];X)=\prod_{j=1}^p\frac{1}{1-X^{\alpha_j}}=\prod_{j=1}^p\frac{1}{1-x_1^{\alpha_{j1}}\cdots x_d^{\alpha_{jd}}}
\]
is a nice rational symmetric function.
Clearly, here we have assumed that $k(0)=0$, (i.e., $\vert\mu\vert=\mu_1+\cdots+\mu_d>0$ in the decomposition of $W$).
Otherwise the homogeneous component of zero degree of $K[W]$ is infinitely dimensional
and the Hilbert series of $K[W]$ is not well defined.
If we present $H(K[W];X)$ as a series of Schur functions
\[
H(K[W];X)=\sum_{\lambda}m(\lambda)S_{\lambda}(X),
\]
then
\[
K[W]=\bigoplus_{\lambda}m(\lambda)W(\lambda).
\]
Hence the multiplicity series of $H(K[W];X)$ carries the information about the decomposition of $K[W]$ as a sum
of irreducible components.

The symmetric function $H(K[W];X)$ is equal to the plethysm
$H(K[Y];X)\circ H(W;X)$
of the Hilbert series of $K[Y_{\infty}]$, $Y_{\infty}=\{y_1,y_2,\ldots\}$, and the Hilbert series of $W$.
More precisely, $H(K[W];X)$ is the part of $d$ variables
of the plethysm
\[
\prod_{i\geq 1}\frac{1}{1-x_i}\circ \sum_{\mu}k(\mu)S_{\mu}(X_{\infty})
=\sum_{n\geq 0}S_{(n)}(X_{\infty})\circ \sum_{\mu}k(\mu)S_{\mu}(X_{\infty})
\]
of the symmetric functions $\sum_{n\geq 0}S_{(n)}$ and $\sum_{\mu}k(\mu)S_{\mu}$
in the infinite set of variables $X_{\infty}=\{x_1,x_2,\ldots\}$.

\begin{example}\label{Multiplicity series of Thrall}
The decomposition of $H(K[W];X)$ as a series of Schur functions is known in few cases only.
The well known identities (see \cite{M}; the third is obtained from the second
applying the Young rule)
\begin{eqnarray*}
H(K[W(2)];X)&=&\prod_{i\leq j}\frac{1}{1-x_ix_j}
\\
&=&\sum_{\lambda} S_{(2\lambda_1,\ldots,2\lambda_d)}(X),
\\
H(K[W(1^2)];X)=\prod_{i<j}\frac{1}{1-x_ix_j}
&=&\sum_{(\lambda_2,\lambda_4,\ldots)} S_{(\lambda_2,\lambda_2,\lambda_4,\lambda_4,\ldots)}(X),
\\
H(K[W(1)\oplus W(1^2)];X)&=&\prod_{i=1}^d\frac{1}{1-x_i}\prod_{i<j}\frac{1}{1-x_ix_j}\\
&=&\sum_{\lambda} S_{(\lambda_1,\ldots,\lambda_d)}(X)
\end{eqnarray*}
give the following expressions for the multiplicity series ($\lfloor r\rfloor$ is the integer part of $r\in\mathbb R$)
\begin{eqnarray*}
M(H(K[W(2)]);X)&=&\sum_{n_i\geq 0}x_1^{2n_1}(x_1x_2)^{2n_2}\cdots (x_1\cdots x_d)^{2n_d}
\\
&=&\prod_{i=1}^d\frac{1}{1-(x_1\cdots x_i)^2},
\\
M(H(K[W(1^2)]);X)&=&\sum_{n_i\geq 0}(x_1x_2)^{n_1}(x_1x_2x_3x_4)^{n_2}\cdots (x_1\cdots x_{2\lfloor d/2\rfloor})^{n_{\lfloor d/2\rfloor}}
\\
&=&\prod_{i=1}^{\lfloor d/2\rfloor}\frac{1}{1-x_1\cdots x_{2i}},
\\
M(H(K[W(1)\oplus W(1^2)]);X)
&=&\sum_{n_i\geq 0}x_1^{n_1}\cdots (x_1\cdots x_d)^{n_d}
\\
&=&\prod_{i=1}^d\frac{1}{1-x_1\cdots x_i}.
\end{eqnarray*}
In the language of the multiplicity series $M'$ we have
\begin{eqnarray*}
M'(H(K[W(2)]);V)&=&\prod_{i=1}^d\frac{1}{1-v_i^2},
\\
M'(H(K[W(1^2)]);V)&=&\prod_{i=1}^{\lfloor d/2\rfloor}\frac{1}{1-v_{2i}},
\\
M'(H(K[W(1)\oplus W(1^2)]);V)
&=&\prod_{i=1}^d\frac{1}{1-v_i}.
\end{eqnarray*}
\end{example}

\begin{example}\label{multiplicities of KW3 from DG1}
The multiplicity series of the Hilbert series of the symmetric algebra
of the $GL_2$-module $W(3)$ was computed (in a quite elaborate way) in \cite{DG1}.
The calculations may be simplified using the methods of \cite{DG2}. Here we illustrate
the advantages of Algorithm \ref{algorithm of Xin}.
Since
\[
S_{(3)}(x_1,x_2)=x_1^3+x_1^2x_2+x_1x_2^2+x_2^3,
\]
the Hilbert series of $K[W(3)]$ is
\[
H(K[W(3)];x_1,x_2)=\frac{1}{(1-x_1^3)(1-x_1^2x_2)(1-x_1x_2^2)(1-x_2^3)}.
\]
We define the function
\[
g(x_1,x_2)=(x_1-x_2)H(K[W(3)];x_1,x_2)
\]
and decompose $g(x_1z,x_2/z)$ as a sum of partial fractions with respect to $z$. The result is
\medskip
\[
\frac{1}{3(1-x_1^2x_2^2)(1-x_1^3x_2^3)(1-x_1z)}
-\frac{1-x_1z-x_1^2x_2^2-2x_1^3x_2^2z}{3(1-x_1^3x_2^3)(1+x_1^2x_2^2+x_1^4x_2^4)(1+x_1z+x_1^2z^2)}
\]
\[
-\frac{x_1^2x_2^2}{(1-x_1^3x_2^3)(1-x_1^6x_2^6)(1-x_1^2x_2z)}
+\frac{x_2}{3(1-x_1^2x_2^2)(1-x_1^3x_2^3)(x_2-z)}
\]
\[
+\frac{x_2(-2z-x_2-x_1^2x_2^2z+x_1^2x_2^3)}{3(1-x_1^3x_2^3)(1+x_1^2x_2^2+x_1^4x_2^4)(x_2^2+x_2z+z^2)}
-\frac{x_1^3x_2^4}{(1-x_1^3x_2^3)(1-x_1^6x_2^6)(x_1x_2^2-z)}.
\]
\medskip

\noindent The factors containing $z$ in the denominators of the first three summands are
$(1-x_1z)$, $(1+x_1z+x_1^2z^2)$, and $(1-x_1^2x_2z)$. Hence these summands
contribute to $\mathop{\Omega}\limits_{\geq}(g(x_1z,x_2/z))$.
We omit the other three summands because the corresponding factors are
$(x_2-z)$, $(x_2^2+x_2z+z^2)$, and $(x_1x_2^2-z)$. Replacing $z$ by 1 we obtain
\begin{eqnarray*}
\mathop{\Omega}\limits_{\geq}(g(x_1z,x_2/z))&=&\frac{1}{3(1-x_1^2x_2^2)(1-x_1^3x_2^3)(1-x_1)}
\\
&&\mbox{}-\frac{1-x_1z-x_1^2x_2^2-2x_1^3x_2^2z}{3(1-x_1^3x_2^3)(1+x_1^2x_2^2+x_1^4x_2^4)(1+x_1+x_1^2)}
\\
&&\mbox{}
-\frac{x_1^2x_2^2}{(1-x_1^3x_2^3)(1-x_1^6x_2^6)(1-x_1^2x_2)}
\\
&=&\frac{x_1(1-x_1^2x_2+x_1^4x_2^2)}{(1-x_1^3)(1-x_1^2x_2)(1-x_1^6x_2^6)}.
\end{eqnarray*}
Hence
\begin{eqnarray*}
M(H(K[W(3)]);x_1,x_2)&=&\frac{1-x_1^2x_2+x_1^4x_2^2}{(1-x_1^3)(1-x_1^2x_2)(1-x_1^6x_2^6)},
\\
M'(H(K[W(3)]);v_1,v_2)&=&\frac{1-v_1v_2+v_1^2v_2^2}{(1-v_1^3)(1-v_1v_2)(1-v_2^6)}.
\end{eqnarray*}
We can rewrite the expression for $M'$ in the form
\begin{eqnarray*}
M'(H(K[W(3)]);v_1,v_2)&=&
\frac{1}{2}\left(\left(\frac{1-v_1v_2+v_1^2v_2^2}{1-v_2^6}+\frac{1+v_1v_2+v_1^2v_2^2}{(1-v_2^3)^2}\right)\frac{1}{1-v_1^3}\right.
\\
&&\mbox{}\left. +\left(\frac{1}{1-v_2^6}-\frac{1}{(1-v_2^3)^2}\right)\frac{1}{1-v_1v_2}\right)
\end{eqnarray*}
and to expand it as a power series in $v_1$ and $v_2$. The coefficient of $v_1^{a_1}v_2^{a_2}$ is equal to the
multiplicity $m(a_1+a_2,a_2)$ of the partition $\lambda=(a_1+a_2,a_2)$.
\end{example}

\begin{example}\label{multiplicities of KWn and others}
In the same way we have computed the multiplicity series
for the Hilbert series of all symmetric algebras $K[W]$ for $\dim(W)\leq 7$ and several cases
for $\dim(W)=8$. For example
\begin{eqnarray*}
M'(H(K[W(4)]))&=&M'\left(\frac{1}{(1-x_1^4)(1-x_1^3x_2)(1-x_1^2x_2^2)(1-x_1x_2^3)(1-x_2^4)}\right)
\\
&=&\frac{1-v_1^2v_2+v_1^4v_2^2}{(1-v_1^4)(1-v_1^2v_2)(1-v_2^4)(1-v_2^6)};
\\
M'(H(K[W(2)\oplus W(2)]))
&=&M'\left(\frac{1}{(1-x_1^2)^2(1-x_1x_2)^2(1-x_2^2)^2}\right)
\\
&=&\frac{1+v_1^2v_2}{(1-v_1^2)^2(1-v_2^2)^3};
\\
M'(H(K[W(3)\oplus W(3)]))
&=&\frac{p(v_1,v_2)}{(1-v_1^3)^2(1-v_1v_2)^2(1-v_2^3)^2(1-v_2^6)^3},
\end{eqnarray*}
where
\begin{multline*}
p(v_1,v_2)=(1-v_2^3+v_2^6)((1+v_1^6v_2^3)(1+v_2^6)-2v_1^3v_2^3(1+v_2^3))
\\
-v_1v_2(1-v_2^3)(2(1-v_2^3-v_2^9)-v_1v_2(4-v_2^3-v_2^9)-v_1^3(1+v_2^6-4v_2^9)+2v_1^4v_2(1+v_2^6-v_2^9)).
\end{multline*}

\noindent
The obtained decompositions can be easily verified using the equation
\[
f(x_1,x_2)=\frac{x_1M'(f;x_1,x_1x_2)-x_2M'(f;x_2,x_1x_2)}{x_1-x_2}
\]
which for $d=2$ is an $M'$-version of the equation in
Remark \ref{how to check multiplicity series}.
\end{example}

Applying our Algorithm \ref{algorithm of Xin} we have obtained the following decompositions of
the Hilbert series of the symmetric algebras of the irreducible $GL_3$-modules $W(\lambda)$, where
$\lambda=(\lambda_1,\lambda_2,\lambda_3)$ is a partition of 3.
Again, for the proof one can use Remark \ref{how to check multiplicity series}.

\begin{theorem}\label{symmetric algebras in three variables}
Let $d=3$ and let $v_1=x_1$, $v_2=x_1x_2$, $v_3=x_1x_2x_3$. The Hilbert series
\begin{eqnarray*}
H(K[W(3)];x_1,x_2,x_3)&=&\prod_{i\leq j\leq k}\frac{1}{1-x_ix_jx_k},
\\
H(K[W(2,1)];x_1,x_2,x_3)&=&\frac{1}{(1-x_1x_2x_3)^2}\prod_{i\not= j}\frac{1}{1-x_i^2x_j},
\\
H(K[W(1,1,1)];x_1,x_2,x_3)&=&\frac{1}{1-x_1x_2x_3}
\end{eqnarray*}
have the following multiplicity series
\begin{eqnarray*}
M'(H(K[W(3)]);v_1,v_2,v_3)&=& \frac{1}{q(v_1,v_2,v_3)}\sum_{i=0}^8v_1^ip_i(v_2,v_3),
\\
M'(H(K[W(2,1)]);v_1,v_2,v_3)&=& \frac{1+v_1v_2v_3+(v_1v_2v_3)^2}{(1-v_1v_2)(1-v_1^3v_3^2)(1-v_2^3v_3)(1-v_3^2)(1-v_3^3)},
\\
M'(H(K[W(1,1,1)]);v_1,v_2,v_3)&=& \frac{1}{1-v_3},
\end{eqnarray*}
where
\begin{eqnarray*}
q&=&(1-v_1^3)(1-v_1v_2)(1-v_1^3v_3^2)(1-v_1^3v_3^3)(1-v_2^6)(1-v_2^3v_3)(1-v_2^3v_3^3)
\\
&&\mbox{}\times (1-v_3^4)(1-v_3^6),
\\
p_0 &=& 1+v_2^9v_3^6,
\\
p_1&=&-v_2(1-v_2^3v_3^2(1+v_3^2+v_3^3)-v_2^6v_3^2(1+v_3^2)+v_2^9v_3^5(1+v_3)),
\\
p_2&=&v_2^2((1+v_3^2+v_3^4)-v_2^3v_3^5(1-v_3)-v_2^6v_3^2(1+v_3^2)+v_2^9v_3^5),
\\
p_3&=&-((1-v_2^3v_3^3)v_3+v_2^6+v_2^9v_3^4(1+v_3^2+v_3^3))v_3^2,
\\
p_4&=&v_2v_3^2((1+2v_3+v_3^3)-v_2^3v_3(v_3+v_2^3)(1+v_3^2)(1+v_3+v_3^2)
\\
&&\mbox{}+v_2^9v_3^4(1+2v_3^2+v_3^3)),
\\
p_5&=&-v_2^2v_3^2(1+v_3+v_3^3+v_2^3v_3^7-v_2^6v_3^3+v_2^9v_3^6),
\\
p_6&=&(v_3-v_2^3v_3^2(1+v_3^2)+v_2^6(1-v_3)+v_2^9v_3^2(1+v_3^2+v_3^4))v_3^5,
\\
p_7&=&-v_2v_3^5(1+v_3-v_2^3v_3^2(1+v_3^2)-v_2^6v_3(1+v_3+v_3^3)+v_2^9v_3^6),
\\
p_8&=&v_2^2v_3^5(1+v_2^9v_3^6).
\end{eqnarray*}
\end{theorem}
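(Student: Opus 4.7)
The plan is to treat the three cases separately and, in each, to apply Theorem \ref{Omega version of multiplicity series} together with an iterated application of Algorithm \ref{algorithm of Xin} (once for each auxiliary variable $z_i$) to compute the multiplicity series, then rewrite the answer in the variables $v_1 = x_1$, $v_2 = x_1 x_2$, $v_3 = x_1 x_2 x_3$.

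The case $\lambda = (1,1,1)$ is immediate. The module $W(1,1,1)$ is one-dimensional with sole weight $(1,1,1)$, so $K[W(1,1,1)] \cong K[w]$ with $\deg(w) = (1,1,1)$ and $H(K[W(1,1,1)];X) = 1/(1-x_1 x_2 x_3)$. Each symmetric power $S^n(W(1,1,1))$ is itself irreducible and isomorphic to $W(n,n,n)$, so the Schur expansion is $\sum_{n \ge 0} S_{(n,n,n)}(X)$, whence $M'(H(K[W(1,1,1)]);V) = \sum_{n \ge 0} v_3^n = 1/(1-v_3)$.

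For $\lambda = (3)$ and $\lambda = (2,1)$ the weights of $W(\lambda)$ (with multiplicities) are read off from the Schur polynomial $S_\lambda(x_1,x_2,x_3)$, giving the product forms for $H(K[W(\lambda)];X)$ stated in the theorem. One then forms
\[
g(X) = H(K[W(\lambda)];X)\,(x_1-x_2)(x_1-x_3)(x_2-x_3),
\]
substitutes $x_1 \mapsto x_1 z_1$, $x_2 \mapsto x_2 z_1^{-1} z_2$, $x_3 \mapsto x_3 z_2^{-1}$, and eliminates $z_1$ and then $z_2$ via Algorithm \ref{algorithm of Xin}, keeping only the partial fractions whose $z_i$-poles come from factors of the form $1 - B z_i^b$. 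Dividing by $x_1^2 x_2$ as in Theorem \ref{Omega version of multiplicity series} and reparametrizing by $v_i$ produces the displayed formulas for $M'$.

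The main obstacle is the size of the intermediate expressions: for $\lambda = (3)$ the final denominator already involves nine binomial factors, and the numerator is a polynomial of degree $8$ in $v_1$ whose coefficients $p_i(v_2,v_3)$ themselves have degree up to $9$ in each of $v_2, v_3$, so a direct derivation by hand is not realistic and the calculation is carried out in Maple. To obtain a human-verifiable proof one invokes Remark \ref{how to check multiplicity series}: letting $h(X)$ denote the proposed multiplicity series and $f(X) = H(K[W(\lambda)];X)$, it suffices to verify the single antisymmetrization identity
\[
f(X)\prod_{i<j}(x_i - x_j) = \sum_{\sigma \in S_3} \text{\rm sign}(\sigma)\, x_{\sigma(1)}^{2} x_{\sigma(2)}\, h(x_{\sigma(1)}, x_{\sigma(2)}, x_{\sigma(3)}).
\]
Both sides are rational functions of $x_1,x_2,x_3$ with a common controlled denominator, so after clearing denominators the check reduces to a finite polynomial identity, to be carried out once for each of the three partitions $\lambda = (3),(2,1),(1,1,1)$.
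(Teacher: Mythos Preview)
Your proposal is correct and follows essentially the same approach as the paper: the paper states that the formulas were obtained by applying Algorithm \ref{algorithm of Xin} and that the proof (verification) can be carried out via Remark \ref{how to check multiplicity series}, exactly as you describe. Your additional direct argument for the case $\lambda=(1,1,1)$ is a minor embellishment but entirely in the same spirit.
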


For a polynomial $GL_d$-module $W=\sum k(\mu)W(\mu)$, the symmetric algebra $K[W]$ has also another natural $\mathbb Z$-grading
induced by the assumption that the elements of $W$ are of first degree. Then the homogeneous component
$K[W]^{(n)}$ of degree $n$ is the symmetric tensor power $W^{\otimes_sn}$ and
\[
K[W]=\bigoplus_{n\geq 0}\bigoplus_{\lambda}m_n(\lambda)W(\lambda),\quad m_n(\lambda)\in\mathbb Z.
\]
In order to take into account both the ${\mathbb Z}^d$-grading induced by the $GL_d$-action and the
natural $\mathbb Z$-grading of $K[W]$, we introduce an additional variable in the Hilbert series of $K[W]$:
\[
H(K[W];X,t)=\sum_{n\geq 0}H(K[W]^{(n)};X)t^n.
\]
If, as above, the Hilbert series of $W$ is
\[
H(W;X)=\sum_{\mu}k(\mu)S_{\mu}(X)=\sum_{j=1}^pX^{\alpha_j},
\]
then
\[
H(K[W];X,t)=\prod_{j=1}^p\frac{1}{1-X^{\alpha_j}t}
=\sum_{n\geq 0}\left(\sum_{\lambda}m_n(\lambda)S_{\lambda}(X)\right)t^n.
\]
Hence the multiplicity series
\[
M(H(K[W]);X,t)=\sum_{n\geq 0}\left(\sum_{\lambda}m_n(\lambda)X^{\lambda}\right)t^n
\]
carries the information about the multiplicities of the irreducible $GL_d$-submodules in the homogeneous
components $K[W]^{(n)}$ of $K[W]$. A minor difference with the nongraded case is that
we allow $\vert\mu\vert=0$ in the decomposition of $W$: Since $W$ is finite dimensional, the homogeneous
components of $K[W]$ are also finite dimensional and the Hilbert series $H(K[W];X,t)$ is well defined
even if $\vert\mu\vert=0$ for some of the summands $W(\mu)$ of $W$.
In the next section we shall see the role of this multiplicity series
in invariant theory.

\begin{example}\label{SWgraded}
Let $d=2$ and let $W=W(3)\oplus W(2)$. Then the Hilbert series of $W$ is
\begin{eqnarray*}
H(W;x_1,x_2)&=&S_{(3)}(x_1,x_2)+S_{(2)}(x_1,x_2)
\\
&=&x_1^3+x_1^2x_2+x_1x_2^2+x_2^3+x_1^2+x_1x_2+x_2^2
\end{eqnarray*}
and the Hilbert series of $K[W]$ which takes into account also the $\mathbb Z$-grading is
\[
H(K[W];x_1,x_2,t)=\prod_{a+b=3}\frac{1}{1-x_1^ax_2^bt}\prod_{a+b=2}\frac{1}{1-x_1^ax_2^bt}.
\]
Applying Algorithm \ref{algorithm of Xin} we obtain that $M'(H(K[W]);x_1,x_2,t)$ is equal to
\[
\frac{\sum_{k=0}^{10}p_k(v_1,v_2)t^k}
{(1-v_1^2t)(1-v_1^3t)(1-v_1v_2t)(1-v_2^2t^2)(1-v_2^4t^3)(1-v_2^6t^4)(1-v_2^6t^5)},
\]
where
\[
p_0=1,\quad p_1=-v_1v_2,\quad p_2=v_1v_2(v_2+v_1v_2+v_1^2), \quad p_3=v_1v_2^2(v_2-v_1^3),
\]
\[
p_4=v_1v_2^5,\quad p_5=v_1v_2^5(1-v_1)(v_2+v_1),\quad p_6=-v_1^3v_2^6,
\]
\[
p_7=v_2^8(v_2-v_1^3),\quad p_8=-v_1v_2^9(v_2+v_1+v_1^2),\quad p_9=v_1^3v_2^{10},\quad p_{10}=-v_1^4v_2^{11}.
\]
Again, we can check directly, without Algorithm \ref{algorithm of Xin}
that the obtained rational function is
the multiplicity series of $H(K[W];x_1,x_2,t)$ using
Remark \ref{how to check multiplicity series}.
\end{example}

We complete this section with the $\mathbb Z$-graded version for the multiplicities
of the symmetric algebra of $W=W(1)\oplus W(1^2)$ for arbitrary positive integer $d$.

\begin{proposition}\label{analogue of Thrall}
Let $d$ be any positive integer. The homogeneous component of degree $n$ of the symmetric algebra
$K[W(1)\oplus W(1^2)]$ of the $GL_d$-module $W=W(1)\oplus W(1^2)$ decomposes as
\[
K[W(1)\oplus W(1^2)]^{(n)}=\bigoplus W(\lambda),
\]
where the summation runs on all $\lambda=(\lambda_1,\ldots,\lambda_d)$ such that
$\lambda_1+\lambda_3+\cdots+\lambda_{2\lfloor (d-1)/2\rfloor+1}=n$.
Equivalently, the multiplicity series of $H(K[W(1)\oplus W(1^2)]);X,t)$ is
\[
M'(H(K[W(1)\oplus W(1^2)]);V,t)=\prod_{2i\leq d}\frac{1}{(1-v_{2i-1}t^i)(1-v_{2i}t^i)},
\]
when $d$ is even and
\[
M'(H(K[W(1)\oplus W(1^2)]);V,t)=\frac{1}{1-v_dt^{(d+1)/2}}\prod_{2i<d}\frac{1}{(1-v_{2i-1}t^i)(1-v_{2i}t^i)}
\]
when  $d$ is odd.
\end{proposition}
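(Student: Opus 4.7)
The plan is to extend the ungraded Thrall-type identity from Example \ref{Multiplicity series of Thrall} to the graded setting and then apply Pieri's rule. Since each degree-$2$ generator $x_ix_j$ of $K[W(1^2)]$ carries the $\mathbb{Z}$-degree $1$, the identity
\[
\prod_{i<j}\frac{1}{1-x_ix_j}=\sum_{\mu}S_{\mu}(X),
\]
summed over partitions $\mu$ with $\mu_{2k-1}=\mu_{2k}$ for every $k$, refines at once to
\[
\prod_{i<j}\frac{1}{1-x_ix_jt}=\sum_{\mu}S_{\mu}(X)\,t^{|\mu|/2}.
\]
Multiplying by $\prod_{i=1}^{d}\frac{1}{1-x_it}=\sum_{n\geq 0}h_n(X)t^n$ and invoking Pieri's rule ($h_n\cdot S_{\mu}=\sum S_{\lambda}$ over $\lambda\supseteq\mu$ with $\lambda/\mu$ a horizontal strip of size $n$) gives
\[
H(K[W];X,t)=\sum_{\lambda}S_{\lambda}(X)\sum_{\mu}t^{|\lambda|-|\mu|/2},
\]
where for fixed $\lambda$, $\mu$ ranges over paired-shape partitions for which $\lambda/\mu$ is a horizontal strip.

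The key observation is that for each $\lambda$ there is exactly one such $\mu$. Indeed, the chain $\lambda_1\geq\mu_1\geq\lambda_2\geq\mu_2\geq\lambda_3\geq\cdots$ combined with $\mu_{2k-1}=\mu_{2k}$ sandwiches each pair as $\mu_{2k-1}=\mu_{2k}=\lambda_{2k}$. When $d$ is odd, one additionally has $\mu_d=0$, consistent with $\lambda_d\geq 0$. Hence $\mu=(\lambda_2,\lambda_2,\lambda_4,\lambda_4,\ldots)$ is uniquely determined by $\lambda$, and a short computation gives
\[
|\lambda|-|\mu|/2=\lambda_1+\lambda_3+\cdots+\lambda_{2\lfloor(d-1)/2\rfloor+1},
\]
which is exactly the $t$-exponent attached to $S_{\lambda}(X)$. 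This proves the direct-sum decomposition of $K[W(1)\oplus W(1^2)]^{(n)}$ stated in the proposition.

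To obtain the closed-form expressions for $M'$, I substitute $e_j=\lambda_j-\lambda_{j+1}$ (with $\lambda_{d+1}=0$) and rewrite
\[
\sum_{k\text{ odd}}\lambda_k=\sum_{k\text{ odd}}\sum_{j\geq k}e_j=\sum_{j=1}^{d}\lceil j/2\rceil\,e_j.
\]
Summing over all $(e_1,\ldots,e_d)\in\mathbb{Z}_{\geq 0}^{d}$ immediately yields
\[
M'(H(K[W(1)\oplus W(1^2)]);V,t)=\prod_{j=1}^{d}\frac{1}{1-v_jt^{\lceil j/2\rceil}},
\]
and splitting this product by the parity of $j$, with the lone factor $1/(1-v_dt^{(d+1)/2})$ when $d$ is odd, recovers the two cases in the statement. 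The only nontrivial step is the sandwich argument pinning down $\mu$; the rest is bookkeeping, and at $t=1$ the formula collapses to $\prod_{j=1}^{d}1/(1-v_j)$, matching Example \ref{Multiplicity series of Thrall}.
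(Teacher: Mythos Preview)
Your proof is correct and follows essentially the same route as the paper: factor the graded Hilbert series as the product of $\sum_m S_{(m)}(X)t^m$ and $\sum_\mu S_\mu(X)t^{|\mu|/2}$ over paired-shape $\mu$, then apply the Young/Pieri rule to read off the $t$-exponent of each $S_\lambda$. The paper fixes the paired shape first and sums over $m$, while you fix $\lambda$ and identify the unique paired $\mu$ via the sandwich $\mu_{2k-1}\geq\lambda_{2k}\geq\mu_{2k}=\mu_{2k-1}$; these are the same argument read in opposite directions. Your derivation of $M'$ via $e_j=\lambda_j-\lambda_{j+1}$ and $\sum_{k\text{ odd}}\lambda_k=\sum_j\lceil j/2\rceil e_j$ is a slightly cleaner packaging of the paper's direct summation, but not a different method.
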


\begin{proof}
Since, as $GL_d$-module,
\[
K[W(1)\oplus W(1^2)]=K[W(1)]\otimes K[W(1^2)],
\]
we obtain that
\begin{eqnarray*}
H(K[W(1)\oplus W(1^2)];X,t)&=&H(K[W(1)];X,t)H(K[W(1^2)];X,t)
\\
&=&\prod_{i=1}^d\frac{1}{1-x_it}\prod_{i<j}\frac{1}{1-x_ix_jt}
\end{eqnarray*}
because the elements of $W(1),W(1^2)\subset W$ are of first degree.
The decompositions
\begin{eqnarray*}
\prod_{i=1}^d\frac{1}{1-x_i}&=&\sum_{k\geq 0}S_{(k)}(X),
\\
\prod_{i<j}\frac{1}{1-x_ix_j}
&=&\sum_{(\lambda_2,\lambda_4,\ldots)} S_{(\lambda_2,\lambda_2,\lambda_4,\lambda_4,\ldots)}(X)
\end{eqnarray*}
give
\begin{eqnarray*}
\prod_{i=1}^d\frac{1}{1-x_it}&=&\sum_{m\geq 0}S_{(m)}(X)t^m,
\\
\prod_{i<j}\frac{1}{1-x_ix_jt}
&=&\sum_{(\lambda_2,\lambda_4,\ldots)} S_{(\lambda_2,\lambda_2,\lambda_4,\lambda_4,\ldots)}(X)t^{\lambda_2+\lambda_4+\cdots}
\end{eqnarray*}
and hence
\[
H(K[W(1)\oplus W(1^2)];X,t)
=\sum_{m\geq 0}\sum_{(\lambda_2,\lambda_4,\ldots)}S_{(m)}(X)
S_{(\lambda_2,\lambda_2,\lambda_4,\lambda_4,\ldots)}(X)t^{m+\lambda_2+\lambda_4+\cdots}.
\]
The product of the Schur functions $S_{(m)}(X)$ and $S_{\mu}(X)$ can be decomposed by the Young rule
which is a partial case of the Littlewood--Richardson rule:
\[
S_{(m)}(X)S_{\mu}(X)=\sum S_{\nu}(X),
\]
where the summation runs on all partitions $\nu\vdash m+\vert\mu\vert$ such that
\[
\nu_1\geq\mu_1\geq \nu_2\geq\mu_2\geq\cdots\geq\nu_d\geq\mu_d.
\]
Applied to our case this gives
\[
S_{(m)}(X)S_{(\lambda_2,\lambda_2,\lambda_4,\lambda_4,\ldots)}(X)=\sum S_{(\lambda_1,\lambda_2,\ldots,\lambda_d)}(X),
\]
where the sum is on all partitions with
\[
(\lambda_1-\lambda_2)+(\lambda_3-\lambda_4)+\cdots+(-1)^{d-1}\lambda_d =m.
\]
Hence the $GL_d$-module $W(\lambda)\subset K[W]$ is a submodule of the homogeneous component of degree
\[
m+\lambda_2+\lambda_4+\cdots=\lambda_1+\lambda_3+\cdots+\lambda_{2\lfloor (d-1)/2\rfloor+1}.
\]
For the statement for the multiplicity series, first let $d=2k+1$. Then
\begin{eqnarray*}
M(H(K[W]);X,t)&=&\sum_{\lambda_1\geq\cdots\geq\lambda_{2k+1}}x_1^{\lambda_1}\cdots x_{2d+1}^{\lambda_{2k+1}}t^{\lambda_1+\lambda_3+\cdots+\lambda_{2k+1}}
\\
&=&\sum_{\lambda_1\geq\cdots\geq\lambda_{2k+1}}(x_1t)^{\lambda_1}x_2^{\lambda_2}(x_3t)^{\lambda_3}\cdots x_{2d}^{\lambda_{2k+1}}(x_{2d+1}t)^{\lambda_{2k+1}}
\\
&=&\sum_{\lambda_1\geq\cdots\geq\lambda_{2k+1}}(x_1t)^{\lambda_1-\lambda_2}(x_1x_2t)^{\lambda_2-\lambda_3}(x_1x_2x_3t^2)^{\lambda_3-\lambda_4}
\\
&&\mbox{}\times
(x_1x_2x_3x_4t^2)^{\lambda_4-\lambda_5}\cdots
 (x_1\cdots x_{2k-1}t^k)^{\lambda_{2k-1}-\lambda_{2k}}
 \\ &&\mbox{}\times (x_1\cdots x_{2k}t^k)^{\lambda_{2k}-\lambda_{2k+1}}
(x_1\cdots x_{2k+1}t^{k+1})^{\lambda_{2k+1}},
\\
M'(H(K[W]);X,t)
&=&\sum_{n_i\geq 0}(v_1t)^{n_1}(v_2t)^{n_2}(v_3t^2)^{n_3}(v_4t^2)^{n_4}\cdots
\\
&&\mbox{}\times (v_{2k-1}t^k)^{n_{2k-1}}(v_{2k}t^k)^{n_{2k}}(v_{2k+1}t^{k+1})^{n_{2k+1}}
\\
&=&\frac{1}{1-v_{2k+1}t^{k+1}}\prod_{i=1}^k\frac{1}{(1-v_{2i-1}t^i)(1-v_{2i}t^i)}.
\end{eqnarray*}
The case $d=2k$ follows immediately from the case $d=2k+1$ by substituting $v_{2k+1}=0$ in the expression of $M'(H(K[W]);X,t)$.
\end{proof}

\section{Invariant theory}\label{section on classical invariant theory}

Without being comprehensive, we shall survey few results related with our topic.
One of the main objects in invariant theory in the 19th century is the algebra
of $SL_2$-invariants of binary forms. Let $W_m=W_{m,2}$ be the vector space of all homogeneous polynomials
of degree $m$ in two variables with the natural action of $SL_2$. The computation of
the Hilbert series
(often also called the Poincar\'e series)
of the algebra of invariants
$K[W_m]^{SL_2}$ was a favorite problem actively also studied nowadays.
It was computed by Sylvester and Franklin \cite{SF1, SF2} for $m\leq 10$ and $m=12$.
In 1980 Springer \cite{Sp} found an explicit formula for the Hilbert series of $K[W_m]^{SL_2}$.
Applying it, Brouwer and Cohen \cite{BC} calculated the Hilbert series of $K[W_m]^{SL_2}$
of degree $\leq 17$. Littelmann and Procesi \cite{LP} suggested an algorithm based on a
variation of the result of Springer and computed the Hilbert series for $m=4k\leq 36$.
More recently, Djokovi\'c \cite{Dj1} proposed a heuristic algorithm for fast computation of the Hilbert series
of the invariants of binary forms, viewed as rational functions, and
computed the series for $m\leq 30$.

Not too much is known about the explicit form of the invariants and their Hilbert series
when $SL_d({\mathbb C})$, $d\geq 3$, acts on the vector space
of forms of degree $m\geq 3$. Most of the known results are for ternary forms.
The generators of the algebra of invariants in the
case of forms of degree 3 were found by Gordan \cite{G}, see also Clebsch and Gordan \cite{CG}; the case of forms of degree 4
has handled by Emmy Noether \cite{No}.
The Hilbert series of the algebra of $SL_3({\mathbb C})$-invariants for forms of degree 4 was calculated by Shioda \cite{Sh}.
Recently Bedratyuk \cite{Bd1, Bd2} found analogues of Sylvester--Cayley and Springer formulas for invariants also of ternary forms.
This allowed him to compute the first coefficients (of the terms of degree $\leq 30$)
of the Hilbert series of the algebras of $SL_3({\mathbb C})$-invariants
of forms of degree $m\leq 7$.

Computing the Hilbert series of the algebra of $SL_d$-invariants ${\mathbb C}[W]^{SL_d}$,
where $W$ is a direct sum of several vector spaces $W_{m_i,d}$ of forms of degree $m_i$ in $d$ variables,
one may use the Molien--Weyl integral formula, evaluating multiple integrals.
This type of integrals can be evaluated using the Residue Theorem, see the book by Derksen and Kemper \cite{DeK} for details.
For concrete decompositions of $W$, the algebra of invariants
$K[W]^{SL_d}$  was studied already by Sylvester.
Its Hilbert series is also known in some cases. For example, recently
Bedratyuk \cite{Bd4} has found a formula for the Hilbert series of the $SL_2$-invariants
$K[W_{m_1,2}\oplus W_{m_2,2}]^{SL_2}$ and has computed these series for $m_1,m_2\leq 20$.
(The results for $m_1,m_2\leq 5$ are given explicitly in \cite{Bd4}.)
Very recently, Bedratyuk and Xin \cite{BX} applied the MacMahon partition analysis to the Molien--Weyl integral formula
and computed the Hilbert series of the algebras of invariants
of some ternary and quaternary forms.

Our approach to the Hilbert series of the algebra of invariants $K[W]^{SL_d}$ of the $SL_d$-module $W$
is based on a theorem of De Concini, Eisenbud and Procesi \cite{DEP}. This theorem implies that
the multiplicities of $S_{\lambda}(X)$ in the Hilbert series of symmetric algebras $K[W]$
of a $GL_d$-module $W$ appear in invariant theory of $SL_d$ and of
the unitriangular group $UT_d=UT_d(K)$ as subgroups of $GL_d$, and in invariant theory of
a single unitriangular matrix. It is combined with
an idea used by Drensky and Genov \cite{DG2} to compute the Hilbert series of the algebra of invariants of
$UT_2$. We extend the $SL_d$-action on $W$ to a polynomial action of $GL_d$.
This is possible in the cases that we consider because the $SL_d$-module $W_{m,d}$ of the forms of degree $m$
can be viewed as a $GL_d$-module isomorphic to $W(m)$. Then we
compute the Hilbert series of the $GL_d$-module $K[W]$ and its multiplicity series $M'(H(K[W]);V,t)$. The
Hilbert series of $K[W]^{SL_d}$ is equal to $M'(H(K[W]);0,\ldots,0,1,t)$. Similarly, if $W$ is a polynomial
$GL_d$-module, then the Hilbert series of $K[W]^{UT_d}$ is equal to $M'(H(K[W]);1,\ldots,1,t)$. The difference with \cite{DG2}
is that there we use for the evaluation of $M(H(K[W]);x_1,x_2,t)$ the
methods developed in \cite{DG2} and here we use the MacMahon partition analysis for the same purpose and for any number of variables.
We shall consider the following problem. {\it Let $W$ be an arbitrary polynomial $GL_d$-module.
How can one calculate the Hilbert series of the algebras of invariants $K[W]^{SL_d}$ and $K[W]^{UT_d}$?}
Clearly, here we assume that $SL_d$ and $UT_d$ are canonically embedded into $GL_d$.
We need the following easy argument. We state it as a lemma and omit the obvious proof.

\begin{lemma}\label{reduction of invariants to irreducible components}
Let $H$ be a subgroup of the group $G$ and let $W_1,W_2$ be $G$-modules. Then the vector space of invariants $W^H\subset W$
in $W=W_1\oplus W_2$ satisfy
\[
W^H=W_1^H\oplus W_2^H.
\]
\end{lemma}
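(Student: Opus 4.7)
The plan is to prove both containments directly from the definition of a direct sum and the fact that each $W_i$ is $G$-invariant, hence $H$-invariant.

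For the inclusion $W_1^H\oplus W_2^H\subseteq W^H$, I would take $w=w_1+w_2$ with $w_i\in W_i^H$ and observe that for every $h\in H$ one has $h\cdot w=h\cdot w_1+h\cdot w_2=w_1+w_2=w$, so $w\in W^H$.

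For the reverse inclusion $W^H\subseteq W_1^H\oplus W_2^H$, I would take $w\in W^H$ and use the direct sum decomposition $W=W_1\oplus W_2$ to write $w=w_1+w_2$ uniquely with $w_i\in W_i$. Since $W_1$ and $W_2$ are $G$-submodules (and $H\subseteq G$), they are stable under the action of $H$, so $h\cdot w_1\in W_1$ and $h\cdot w_2\in W_2$ for every $h\in H$. Therefore $w=h\cdot w=(h\cdot w_1)+(h\cdot w_2)$ is another decomposition of $w$ with respect to $W=W_1\oplus W_2$. By the uniqueness of the decomposition, $h\cdot w_1=w_1$ and $h\cdot w_2=w_2$ for all $h\in H$, which means $w_1\in W_1^H$ and $w_2\in W_2^H$.

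There is essentially no obstacle here: the whole argument rests on the uniqueness of the decomposition in a direct sum combined with the $H$-stability of the submodules $W_i$, which is why the authors state the lemma and omit the proof. The only mild point worth making explicit is that the hypothesis ``$W_1,W_2$ are $G$-modules'' (and not merely $H$-modules) is used only to guarantee $H$-stability of $W_1$ and $W_2$ as subsets of $W$; the same proof would work verbatim if $W_1,W_2$ were assumed to be $H$-submodules from the start.
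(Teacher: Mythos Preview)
Your proof is correct and is precisely the standard elementary argument the authors have in mind; the paper explicitly omits the proof as obvious, so there is nothing to compare beyond noting that you have written out exactly this routine verification.
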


\begin{theorem}\label{SL- and UT-invariants}
Let $W$ be a polynomial $GL_d$-module with Hilbert series with respect to the grading
induced by the $GL_d$-action on $W$
\[
H(W;X)=\sum a_ix_1^{i_1}\cdots x_d^{i_d},
\quad a_i\geq 0,a_i\in\mathbb Z,
\]
and let
\[
H(K[W];X,t)=\prod\frac{1}{(1-X^it)^{a_i}}
\]
be the Hilbert series of $K[W]$ which counts also the natural $\mathbb Z$-grading.
Then the Hilbert series of the algebras of invariants $K[W]^{SL_d}$ and $K[W]^{UT_d}$
are given by
\begin{eqnarray*}
H(K[W]^{SL_d},t)&=&M'(H(K[W]);0,\ldots,0,1,t),
\\
H(K[W]^{UT_d},t)&=&M(H(K[W]);1,\ldots,1,t)
\\
&=&M'(H(K[W]);1,\ldots,1,t).
\end{eqnarray*}
\end{theorem}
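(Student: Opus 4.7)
The plan is to reduce everything to the explicit decomposition
\[
K[W]=\bigoplus_{n\geq 0}\bigoplus_{\lambda}m_n(\lambda)W(\lambda)
\]
that was recorded in the previous section. By Lemma \ref{reduction of invariants to irreducible components} applied inside each homogeneous component $K[W]^{(n)}$, the dimension of the $SL_d$-invariants (respectively $UT_d$-invariants) in $K[W]^{(n)}$ equals $\sum_\lambda m_n(\lambda)\dim W(\lambda)^{SL_d}$ (respectively $\sum_\lambda m_n(\lambda)\dim W(\lambda)^{UT_d}$). The two Hilbert series to be computed are therefore
\[
H(K[W]^{SL_d},t)=\sum_{n\geq 0}\Bigl(\sum_{\lambda}m_n(\lambda)\dim W(\lambda)^{SL_d}\Bigr)t^n,
\]
and analogously for $UT_d$. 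So everything reduces to computing $\dim W(\lambda)^{SL_d}$ and $\dim W(\lambda)^{UT_d}$ for each irreducible polynomial $GL_d$-module $W(\lambda)$.

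The two representation-theoretic inputs I would invoke are standard. First, the irreducible polynomial $GL_d$-module $W(\lambda)$ remains irreducible when restricted to $SL_d$, and two such $W(\lambda)$, $W(\mu)$ agree on $SL_d$ exactly when $\lambda-\mu$ is a constant vector. Consequently $W(\lambda)^{SL_d}$ is one-dimensional when $\lambda=(k,k,\ldots,k)$ (a rectangular partition of depth $d$) and is zero otherwise. Second, by the classical highest-weight theory the unitriangular group $UT_d$ fixes a unique line in $W(\lambda)$, the highest weight line, so $\dim W(\lambda)^{UT_d}=1$ for every $\lambda$. Substituting these into the formulas above gives
\[
H(K[W]^{SL_d},t)=\sum_{n\geq 0}\sum_{k\geq 0}m_n(k^d)t^n,\qquad H(K[W]^{UT_d},t)=\sum_{n\geq 0}\sum_{\lambda}m_n(\lambda)t^n.
\]

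To finish, I would just compare these expressions with the definition of the multiplicity series. Recall
\[
M'(H(K[W]);V,t)=\sum_{n\geq 0}\sum_{\lambda}m_n(\lambda)v_1^{\lambda_1-\lambda_2}v_2^{\lambda_2-\lambda_3}\cdots v_{d-1}^{\lambda_{d-1}-\lambda_d}v_d^{\lambda_d}\,t^n.
\]
Setting $v_1=\cdots=v_{d-1}=0$ and $v_d=1$ (with the convention $0^0=1$) kills every term except those with $\lambda_1=\lambda_2=\cdots=\lambda_d$, and yields precisely $\sum_{n,k}m_n(k^d)t^n=H(K[W]^{SL_d},t)$. Setting all $v_i=1$ yields $\sum_{n,\lambda}m_n(\lambda)t^n=H(K[W]^{UT_d},t)$, which also equals $M(H(K[W]);1,\ldots,1,t)$ because the substitution $x_i=1$ in the $X$-version collapses the same way. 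The main obstacle is not computational—it is ensuring that the $SL_d$-restriction/highest-weight facts are applied in the polynomial (not rational) setting and that the convention $0^0=1$ is clearly stated so that the rectangular partitions are the only contributors for the $SL_d$ formula.
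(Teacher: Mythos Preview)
Your proposal is correct and follows essentially the same route as the paper's own proof: decompose $K[W]$ into irreducibles, invoke Lemma~\ref{reduction of invariants to irreducible components}, use the standard facts that $\dim W(\lambda)^{SL_d}=1$ iff $\lambda$ is rectangular and $\dim W(\lambda)^{UT_d}=1$ always, and then read off the Hilbert series by specializing the multiplicity series $M'$ at $(0,\ldots,0,1)$ and $(1,\ldots,1)$ respectively. The paper's proof is slightly terser and does not spell out the $0^0=1$ convention or the restriction-to-$SL_d$ irreducibility, but the logical content is identical.
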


\begin{proof}
Let
\[
K[W]=\bigoplus_{n\geq 0}\bigoplus_{\lambda}m_n(\lambda)W(\lambda)
\]
be the decomposition of the $\mathbb Z$-graded $GL_d$-module $K[W]$. Its Hilbert series is
\[
H(K[W];X,t)=\sum_{n\geq 0}\left(\sum_{\lambda}m_n(\lambda)S_{\lambda}(X)\right)t^n
\]
and the multiplicity series of $H(K[W];X,t)$ are
\begin{eqnarray*}
M(H(K[W]);X,t)&=&\sum_{n\geq 0}\left(\sum_{\lambda}m_n(\lambda)X^{\lambda}\right)t^n,
\\
M'(H(K[W]);V,t)&=&\sum_{n\geq 0}
\left(\sum_{\lambda}m_n(\lambda)v_1^{\lambda_1-\lambda_2}\cdots v_{d-1}^{\lambda_{d-1}-\lambda_d}v_d^{\lambda_d}\right)t^n.
\end{eqnarray*}
It is a well known fact that the irreducible $GL_d$-module $W(\lambda)=W(\lambda_1,\ldots,\lambda_d)$
contains a one-dimensional $SL_d$-invariant subspace if $\lambda_1=\cdots=\lambda_d$
(when $\dim(W(\lambda)=1$ and $W(\lambda)^{SL_d}=W(\lambda)$) and contains no invariants
if $\lambda_j\not=\lambda_{j+1}$ for some $j$. Applying Lemma \ref{reduction of invariants to irreducible components}
we immediately obtain
\begin{eqnarray*}
K[W]^{SL_d}&=&\bigoplus_{n\geq 0}\bigoplus_{\lambda_1=\cdots=\lambda_d}m_n(\lambda)W(\lambda),
\\
H(K[W]^{SL_d};t)&=&\sum_{n\geq 0}\left(\sum_{\lambda_1=\cdots=\lambda_d}m_n(\lambda)\right)t^n.
\end{eqnarray*}
Evaluating the monomials in the expansion of $M'(H(K[W]);V,t)$ for $v_1=\cdots=v_{d-1}=0$, $v_d=1$ we obtain
\[
v_1^{\lambda_1-\lambda_2}\cdots v_{d-1}^{\lambda_{d-1}-\lambda_d}v_d^{\lambda_d}\vert_{V=(0,\ldots,0,1)}=
\begin{cases}
1,\text{ if }\lambda_1=\cdots=\lambda_d,\\
0,\text{ if }\lambda_j\not=\lambda_{j+1}\text{ for some }j\\
\end{cases}
\]
which completes the case of $SL_d$-invariants.

It is also well known that every irreducible $GL_d$-module $W(\lambda)$ has a one-dimensional $UT_d$-invariant subspace
which is spanned on the only (up to a multiplicative constant) element $w\in W(\lambda)$ with the property that
the diagonal subgroup $D_d$ of $GL_d$ acts by
\[
g(w)=\xi_1^{\lambda_1}\cdots\xi_d^{\lambda_d}w,\quad g=\text{diag}(\xi_1,\ldots,\xi_d).
\]
Hence
\begin{eqnarray*}
K[W]^{UT_d}&=&\bigoplus_{n\geq 0}\bigoplus_{\lambda}m_n(\lambda)W(\lambda)^{UT_d},
\\
H(K[W]^{UT_d};t)&=&\sum_{n\geq 0}\left(\sum_{\lambda}m_n(\lambda)\right)t^n
\\
&=&M(H(K[W]);1,\ldots,1,t)=M'(H(K[W]);1,\ldots,1,t).
\end{eqnarray*}
\end{proof}

Below we shall illustrate Theorem \ref{SL- and UT-invariants} on the Hilbert series of the $SL_2$-invariants for the $GL_2$-modules
considered in the examples of Section \ref{section on symmetric algebras}.

\begin{example}\label{invariants for homogeneous modules}
If the polynomial $GL_d$-module $W$ is homogeneous of degree $m$, i.e.,
$g(w)=\xi^mw$ for $w\in W$ and $g=\text{diag}(\xi,\ldots,\xi)\in GL_d$, then
\begin{eqnarray*}
H(K[W];x_1,\ldots,x_d,t)&=&H(K[W];X,t)
\\
&=&H(K[W];X\sqrt[m]{t})=H(K[W];x_1\sqrt[m]{t},\ldots,x_d\sqrt[m]{t})
\end{eqnarray*}
because the elements of $W$ are of degree 1 with respect to the $\mathbb Z$-grading and of degree $m$
with respect to the ${\mathbb Z}^d$-grading.
The results of Example \ref{Multiplicity series of Thrall} give
\begin{eqnarray*}
M'(H(K[W(2)]);V)&=&\prod_{i=1}^d\frac{1}{1-v_i^2},
\\
M'(H(K[W(2)]);V,t)&=&\prod_{i=1}^d\frac{1}{1-v_i^2t^i},
\\
H(K[W(2)]^{SL_d};t)&=&M'(H(K[W(2)]);0,\ldots,0,1,t)=\frac{1}{1-t^d},
\\
H(K[W(2)]^{UT_d};t)&=&M'(H(K[W(2)]);1,\ldots,1,t)=\prod_{i=1}^d\frac{1}{1-t^i};
\\
M'(H(K[W(1^2)]);V)&=&\prod_{i=1}^{\lfloor d/2\rfloor}\frac{1}{1-v_{2i}},
\\
M'(H(K[W(1^2)]);V,t)&=&\prod_{i=1}^{\lfloor d/2\rfloor}\frac{1}{1-v_{2i}t^i},
\\
H(K[W(1^2)]^{SL_d};t)&=&\begin{cases}
\displaystyle{\frac{1}{1-t^{d/2}}},\text{ if }d\text{ is even},\\
\\
0,\text{ if }d\text{ is odd},\\
\end{cases}
\\
H(K[W(1^2)]^{UT_d};t)&=&\prod_{i=1}^{\lfloor d/2\rfloor}\frac{1}{1-t^i}.
\end{eqnarray*}
Similarly, for $d=2$ Examples \ref{multiplicities of KW3 from DG1} and \ref{multiplicities of KWn and others} give
\begin{eqnarray*}
M'(H(K[W(3)]);v_1,v_2)&=&\frac{1-v_1v_2+v_1^2v_2^2}{(1-v_1^3)(1-v_1v_2)(1-v_2^6)},
\\
M'(H(K[W(3)]);v_1,v_2,t)&=&M'(H(K[W(3)];v_1\sqrt[3]{t},v_2\sqrt[3]{t^2})
\\
&=&\frac{1-v_1v_2t+v_1^2v_2^2t^2}{(1-v_1^3t)(1-v_1v_2t)(1-v_2^6t^4)},
\\
H(K[W(3)]^{SL_2};t)&=&M'(H(K[W(3)];0,1,t)=\frac{1}{1-t^4},
\\
H(K[W(3)]^{UT_2};t)&=&M'(H(K[W(3)];1,1,t)=\frac{1-t+t^2}{(1-t)^2(1-t^4)};
\\
M'(H(K[W(4)]);v_1,v_2,t)&=&M'(H(K[W(4)]);v_1\sqrt[4]{t},v_2\sqrt{t}),
\\
H(K[W(4)]^{SL_2};t)&=&M'(H(K[W(4)]);0,1,t)=\frac{1}{(1-t^2)(1-t^4)},
\\
H(K[W(4)]^{UT_2};t)&=&M'(H(K[W(4)]);1,1,t)
\\
&=&\frac{1-t+t^2}{(1-t)^2(1-t^2)(1-t^4)};
\\
M'(H(K[W(2)\oplus W(2)]);v_1,v_2,t)&=&M'(H(K[W(2)\oplus W(2)]);v_1\sqrt{t},v_2t),
\\
H(K[W(2)\oplus W(2)]^{SL_2};t)&=&\frac{1}{(1-t^2)^3},
\\
H(K[W(2)\oplus W(2)]^{UT_2};t)&=&\frac{1+t^2}{(1-t)^2(1-t^2)^3};
\\
M'(H(K[W(3)\oplus W(3)]):,v_1,v_2,t)&=&M'(H(K[W(3)\oplus W(3)]):,v_1\sqrt[3]{t},v_2\sqrt[3]{t^2}),
\\
H(K[W(3)\oplus W(3)]^{SL_2};t)&=&\frac{(1-t^2+t^4)(1+t^4)}{(1-t^2)^5(1+t^2)^3},
\\
H(K[W(3)\oplus W(3)]^{UT_2};t)
&=&\frac{1+t^{10}+3t^2(1+t^6)+6t^3(1+t+t^2+t^3+t^4)}{(1-t)^2(1-t^2)^5(1+t^2)^3}.
\end{eqnarray*}
Finally, for $d=3$ Theorem \ref{symmetric algebras in three variables} gives that
\begin{eqnarray*}
M'(H(K[W(3)]);v_1,v_2,v_3,t)&=&M'(H(K[W(3)]);v_1\sqrt[3]{t},v_2\sqrt[3]{t^2},v_3t),
\\
(H(K[W(3)]^{SL_3};t)&=&\frac{1}{(1-t^4)(1-t^6)},
\\
(H(K[W(3)]^{UT_3};t)&=&\frac{(1+t^3)(1+t^9)+2t^4(1+t^4)+3t^5(1+t+t^2)}{(1-t)(1-t^2)(1-t^3)^2(1-t^4)^2(1-t^5)}
\end{eqnarray*}
and similarly
\begin{eqnarray*}
H(K[W(2,1)]^{SL_3};t)&=&\frac{1}{(1-t^2)(1-t^3)},
\\
H(K[W(2,1)]^{UT_3};t)&=&\frac{1-t+t^2}{(1-t)^2(1-t^2)(1-t^3)^2};
\\
H(K[W(1^3)]^{SL_3};t)&=&H(K[W(1^3)]^{UT_3};t)=\frac{1}{1-t}.
\end{eqnarray*}
\end{example}

\begin{example}\label{invariants - nonhomogeneous case}
The translation of Example \ref{SWgraded} to the language of $SL_2$- and $UT_2$-invariants gives
\begin{eqnarray*}
H(K[W(3)\oplus W(2)]^{SL_2};t)&=&\frac{1+t^9}{(1-t^2)(1-t^3)(1-t^4)(1-t^5)},
\\
H(K[W(3)\oplus W(2)]^{UT_2};t)&=&\frac{(1-t)(1-t^7)+4t^2(1+t^4)-t^3(1+t^2)+5t^4}{(1-t)^3(1-t^3)(1-t^4)(1-t^5)}.
\end{eqnarray*}
For an arbitrary $d$ Proposition \ref{analogue of Thrall} gives
\begin{eqnarray*}
H(K[W(1)\oplus W(1^2)]^{SL_d};t)&=&\begin{cases}
\displaystyle{\frac{1}{1-t^k}},\text{ if }d=2k,\\
\\
\displaystyle{\frac{1}{1-t^{k+1}}},\text{ if }d=2k+1,\\
\end{cases}
\\
H(K[W(1)\oplus W(1^2)]^{UT_d};t)&=&\begin{cases}
\displaystyle{\prod_{i=1}^k\frac{1}{(1-t^i)^2}},\text{ if }d=2k,\\
\\
\displaystyle{\frac{1}{1-t^{k+1}}\prod_{i=1}^k\frac{1}{(1-t^i)^2}},\text{ if }d=2k+1.\\
\end{cases}
\end{eqnarray*}
\end{example}

In the above examples our results coincide with the known ones, see e.g., \cite{SF1, Di1, Dj1, Bd4, Bd1}.

The invariant theory of $UT_2$ may be restated in the language of
linear locally nilpotent derivations. Recall that a derivation of a (not necessarily commutative or associative) algebra $R$
is a linear operator $\delta$ with the property that
\[
\delta(u_1u_2)=\delta(u_1)u_2+u_1\delta(u_2),\quad u_1,u_2\in R.
\]
The derivation $\delta$ is locally nilpotent if for any $u\in R$ there exists a $p$ such that $\delta^p(u)=0$.
Locally nilpotent derivations are interesting objects with relations to invariant theory, the 14th Hilbert problem,
automorphisms of polynomial algebras, the Jacobian conjecture, etc., see the monographs by Nowicki \cite{N}, van den Essen \cite{Es},
and Freudenburg \cite{Fr}. Linear locally nilpotent derivations $\delta$ of the polynomial algebra $K[Y]$
(acting as linear operators on the vector space $KY$ with basis $Y=\{y_1,\ldots,y_d\}$) were studied by Weitzenb\"ock \cite{W}
who proved that the algebra of constants $K[Y]^{\delta}$, i.e., the kernel of $\delta$, is finitely generated.
Nowadays linear locally nilpotent derivations of $K[Y]$ are known as {\it Weitzenb\"ock derivations}
and are subjects of intensive study.

The algebra $K[Y]^{\delta}$ coincides with the algebra $K[Y]^G$ of invariants of the cyclic group $G$ generated by
\[
\exp(\delta)=1+\frac{\delta}{1!}+\frac{\delta^2}{2!}+\cdots
\]
and with the algebra of invariants of the additive group $K_a$ of the field $K$ with its $d$-dimensional representation
\[
\alpha\to\exp(\alpha\delta),\quad \alpha\in K_a,
\]
which allows to involve invariant theory.
Historically, it seems that this relation was used quite rarely and some of the results on
Weitzenb\"ock derivations rediscover classical results in invariant theory. For example, the modern proof of
the theorem of Weitzenb\"ock given by Seshadri \cite{Se} is equivalent to the results of Roberts \cite{Ro}
that for an $SL_2$-module $W$ the algebra $K[W]^{UT_2}$ is isomorphic to the algebra of covariants.
There is an elementary version of the proof by Seshadri given by Tyc \cite{Ty} which is in the language
of representations of the Lie algebra $sl_2(K)$ and can be followed without serious algebraic knowledge.
Let $\delta$ be a Weitzenb\"ock derivation.
All eigenvalues of $\delta$ (acting on $KY$) are equal to 0 and, up to a linear change of the
coordinates of $K[Y]$, $\delta$ is determined by its Jordan normal form. Hence, for each fixed dimension $d$
there is only a finite number of Weitzenb\"ock derivations.
The only derivation which corresponds to a single Jordan cell is called {\it basic}.
Onoda \cite{O} presented an algorithm which calculates the Hilbert series in the case
of a basic Weitzenb\"ock derivation. He calculated the
Hilbert series for the basic derivation $\delta$ and $d=6$
and, as a consequence showed that the algebra of constants  ${\mathbb C}[Y]^{\delta}$ is not
a complete intersection. (By the same paper \cite{O},
${\mathbb C}[Y]^{\delta}$ is Gorenstein for any Weitzenb\"ock derivation
$\delta$ which agrees with a general fact in invariant theory of classical groups.)
Other methods to compute the Hilbert series of $K[W]^{\delta}$ for any Weitzenb\"ock derivation $\delta$
are developed by Bedratyuk; see \cite{Bd5} and the references there.
Below we show how the MacMahon partition analysis can be used to compute the Hilbert series of $K[W]^{\delta}$.
The following theorem and its corollary were announced in \cite{DG2}.

\begin{theorem}\label{Weitzenboeck}
Let $\delta$ be a Weitzenb\"ock derivation of $K[Y]$
with Jordan normal form consisting of $k$ cells
of sizes $d_1+1,\ldots,d_k+1$, respectively. Let
\[
f_{\delta}(x_1,x_2,t)=\frac{1}{q_{d_1}(x_1,x_2,t)\cdots q_{d_k}(x_1,x_2,t)},
\]
where
\[
q_d(x_1,x_2,t)=(1-x_1^dt)(1-x_1^{d-1}x_2t)\cdots (1-x_1x_2^{d-1}t)(1-x_2^dt).
\]
Then the Hilbert series of the algebra of constants $K[Y]^{\delta}$
is given by
\[
H(K[Y]^{\delta};t)=M(f_{\delta};1,1),
\]
where $M(f_{\delta};x_1,x_2)$ is the multiplicity series of the symmetric with respect to $x_1,x_2$
function $f_{\delta}(x_1,x_2,t)\in K(t)[[x_1,x_2]]^{S_2}$.
\end{theorem}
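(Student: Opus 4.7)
The plan is to identify the Weitzenböck derivation with a $UT_2$-action and reduce to Theorem~\ref{SL- and UT-invariants}. Since $\delta$ is nilpotent with all eigenvalues equal to $0$, for each $\alpha\in K$ the exponential $\exp(\alpha\delta)$ is a well-defined element of $GL(KY)$, and the map $\alpha\mapsto\exp(\alpha\delta)$ gives a representation of the additive group $K_a$ on $KY$. The constants of $\delta$ coincide with the invariants of this $K_a$-action: $K[Y]^{\delta}=K[Y]^{K_a}$.

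Next I would match the Jordan decomposition of $\delta$ with the decomposition of $KY$ as an $SL_2$-module. A single Jordan cell of size $d+1$ is, up to a change of basis, exactly the nilpotent lowering operator on the $(d+1)$-dimensional irreducible $SL_2$-module $W(d)$ (the $d$-th symmetric power of the standard representation). Hence, viewing $KY$ as an $SL_2$-module via $\alpha\mapsto\exp(\alpha\delta)$, with $K_a$ identified with $UT_2\subset SL_2$, we obtain an isomorphism
\[
KY\;\cong\; W(d_1)\oplus\cdots\oplus W(d_k).
\]
Extending diagonally to the symmetric algebra, $K[Y]=K[W]$ where $W=W(d_1)\oplus\cdots\oplus W(d_k)$ is now a polynomial $GL_2$-module, and $K[Y]^{\delta}=K[W]^{UT_2}$.

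I would then compute the $\mathbb Z^2\times\mathbb Z$-graded Hilbert series of $K[W]$. Since the character of $W(d)$ is
\[
S_{(d)}(x_1,x_2)=x_1^d+x_1^{d-1}x_2+\cdots+x_1x_2^{d-1}+x_2^d,
\]
the Hilbert series of $W$ (with respect to the $GL_2$-grading) is $\sum_{j=1}^k S_{(d_j)}(x_1,x_2)$, and therefore
\[
H(K[W];x_1,x_2,t)=\prod_{j=1}^k\prod_{i=0}^{d_j}\frac{1}{1-x_1^{d_j-i}x_2^{i}t}=\frac{1}{q_{d_1}(x_1,x_2,t)\cdots q_{d_k}(x_1,x_2,t)}=f_{\delta}(x_1,x_2,t).
\]
Now Theorem~\ref{SL- and UT-invariants} applied to this $W$ gives
\[
H(K[W]^{UT_2};t)=M(H(K[W]);1,1,t)=M(f_{\delta};1,1,t),
\]
which, combined with $K[Y]^{\delta}=K[W]^{UT_2}$, yields the desired formula.

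The only genuinely delicate point is the dictionary in the second paragraph: one must check that after the change of basis putting $\delta$ in Jordan form, the $K_a$-action $\exp(\alpha\delta)$ on a single cell of size $d+1$ really coincides (up to scalars in each weight space) with the action of $\begin{pmatrix}1&\alpha\\0&1\end{pmatrix}\in UT_2$ on the symmetric power $W(d)$. Everything else is a direct specialization of the previously established Theorem~\ref{SL- and UT-invariants} together with the standard plethystic identity for $H(K[W(d)];x_1,x_2,t)$.
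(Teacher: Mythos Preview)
Your proof is correct and follows essentially the same route as the paper: identify $KY$ with the $GL_2$-module $W=W(d_1)\oplus\cdots\oplus W(d_k)$, observe that $K[Y]^{\delta}=K[W]^{UT_2}$ and that $f_{\delta}$ is the graded Hilbert series of $K[W]$, and then apply Theorem~\ref{SL- and UT-invariants}. You supply more detail than the paper on the dictionary between a Jordan cell of size $d+1$ and the $UT_2$-action on $W(d)$ via $\alpha\mapsto\exp(\alpha\delta)$, which the paper simply asserts; otherwise the arguments coincide.
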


\begin{proof}
If $\delta$ has $k$ Jordan cells and the $i$th cell is of size $d_i+1$, $i=1,\ldots,k$, we identify
the vector space $KY$ with the $GL_2$-module
\[
W=W(d_1)\oplus\cdots\oplus W(d_k)
\]
and the algebra $K[Y]$ with the symmetric algebra $K[W]$. Then the algebra of constants
$K[Y]^{\delta}$ coincides with the algebra $K[W]^{UT_2}$ of $UT_2$-invariants.
Obviously, the function $f_{\delta}(x_1,x_2,t)$ is equal to the Hilbert series of
the $\mathbb Z$-graded $GL_2$-module $K[W]$. Hence Theorem \ref{SL- and UT-invariants}
completes the proof.
\end{proof}

\begin{example}\label{Weitzenboeck for d up to 7}
Let $\delta=\delta(d_1,\ldots,d_k)$ be the Weitzenb\"ock derivation
with $k$ Jordan cells of size $d_i+1$, $i=1,\ldots,k$, respectively.
If the matrix of $\delta$
contains a Jordan cell of size 1 corresponding to $x_d$, then
\[
K[x_1,\ldots,x_{d-1},x_d]^{\delta}=
K[x_1,\ldots,x_{d-1}]^{\delta}[x_d]
\]
and the Hilbert series of the algebras of constants $K[x_1,\ldots,x_d]^{\delta}$ and
$K[x_1,\ldots,x_{d-1}]^{\delta}$ are related by
\[
H(K[x_1,\ldots,x_d]^{\delta};t)=\frac{1}{1-t}H(K[x_1,\ldots,x_{d-1}]^{\delta};t).
\]
Hence it is sufficient to consider only $\delta$ with Jordan matrices
without 1-cells. Below we extend the results in Examples \ref{invariants for homogeneous modules} and
\ref{invariants - nonhomogeneous case} and give the Hilbert series of $K[Y]^{\delta}$
for all possible $\delta$ with $d\leq 7$. Originally the computations were performed in \cite{DG2}
illustrating the methods developed there for symmetric functions in two variables.
Here we repeated the computations with the methods of the MacMahon partition analysis.
Clearly, the results coincide with those from \cite{Bd5}.

\noindent $d=2$:
\[
H(K[Y]^{\delta(1)};t)=\frac{1}{1-t};
\]
$d=3$:
\[
H(K[Y]^{\delta(2)};t)=\frac{1}{(1-t)(1-t^2)};
\]
$d=4$:
\[
H(K[Y]^{\delta(3)};t)=\frac{1-t+t^2}{(1-t)^2(1-t^4)}=\frac{1+t^3}{(1-t)(1-t^2)(1-t^4)},
\]
\[
H(K[Y]^{\delta(1,1)};t)=\frac{1}{(1-z)^2(1-z^2)};
\]
$d=5$:
\[
H(K[Y]^{\delta(4)};t)=\frac{1-t+t^2}{(1-t)^2(1-t^2)(1-t^3)}=\frac{1+t^3}{(1-t)(1-t^2)^2(1-t^3)},
\]
\[
H(K[Y]^{\delta(2,1)};t)
=\frac{1}{(1-z)^2(1-z^2)(1-z^3)};
\]
$d=6$:
\[
H(K[Y]^{\delta(5)};t)=\frac{p(z)}{(1-z)(1-z^2)(1-z^4)(1-z^6)(1-z^8)},
\]
\[
p(z)=1+z^2+3z^3+3z^4+5z^5+4z^6+6z^7+6z^8+4z^9+5z^{10}+3z^{11}+3z^{12}+z^{13}+z^{15},
\]
\[
H(K[Y]^{\delta(3,1)};t)=\frac{1+z^2+3z^3+z^4+z^6}{(1-z)^2(1-z^2)(1-z^4)^2},
\]
\[
H(K[Y]^{\delta(2,2)};t)=\frac{1+z^2}{(1-z)^2(1-z^2)^3},
\]
\[
H(K[Y]^{\delta(1,1,1)};t)
=\frac{1-z^3}{(1-z)^3(1-z^2)^3}=\frac{1+z+z^2}{(1-z)^2(1-z^2)^3};
\]
$d=7$:
\[
H(K[Y]^{\delta(6)};t)=\frac{1+z^2+3z^3+4z^4+4z^5+4z^6+3z^7+z^8+z^{10}}{(1-z)(1-z^2)^2(1-z^3)(1-z^4)(1-z^5)},
\]
\[
H(K[Y]^{\delta(4,1)};t)=\frac{1+2z^2+2z^3+4z^4+2z^5+2z^6+z^8}{(1-z)^2(1-z^2)(1-z^3)^2(1-z^5)},
\]
\[
H(K[Y]^{\delta(3,2)};t)
=\frac{1-z+4z^2-z^3+5z^4-z^5+4z^6-z^7+z^8}{(1-z)^3(1-z^3)(1-z^4)(1-z^5)}
\]
\[
=\frac{1+3z^2+3z^3+4z^4+4z^5+3z^6+3z^7+z^9}{(1-z)^2(1-z^2)(1-z^3)(1-z^4)(1-z^5)},
\]
\[
H(K[Y]^{\delta(2,1,1)};t)
=\frac{1+3z^2+z^4}{(1-z)^3(1-z^2)(1-z^3)^2}.
\]
\end{example}

\begin{corollary}\label{not complete intersection}
For $d\leq 7$ the algebra of constants $K[Y]^{\delta}$ of the Weitzenb\"ock derivation
$\delta=\delta(d_1,\ldots,d_k)$ is not a complete intersection for
\[
(d_1,\ldots,d_k)=(5),(3,1),(6),(4,1),(3,2),(2,1,1).
\]
\end{corollary}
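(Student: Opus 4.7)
The plan is to apply the standard Hilbert series criterion: if a graded Cohen--Macaulay algebra $A$ is a complete intersection, then
\[
H(A;t)=\frac{\prod_{i=1}^{r}(1-t^{b_i})}{\prod_{j=1}^{n}(1-t^{a_j})}
\]
for some positive integers $a_j,b_i$ with $n-r=\dim A$. By Onoda's result \cite{O} recalled before Theorem \ref{Weitzenboeck}, every $K[Y]^{\delta}$ is Gorenstein, hence Cohen--Macaulay, so the criterion applies. A convenient necessary form: writing $H(A;t)=P(t)/Q(t)$ in lowest terms, the polynomial $P(t)$ must factor into cyclotomic polynomials over $\mathbb{Z}$, since each $(1-t^{b_i})=\prod_{d\mid b_i}\Phi_d(t)$. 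Equivalently, every zero of $P(t)$ must be a root of unity.

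The strategy is then to treat the six listed tuples one by one. For each, I would take the Hilbert series from Example \ref{Weitzenboeck for d up to 7}, reduce to lowest terms by cancelling any shared cyclotomic factors between numerator and denominator, and exhibit a zero of the reduced numerator that is not a root of unity. For $\delta(2,1,1)$, for instance, the numerator $1+3t^2+t^4$ is quadratic in $t^2$ with roots $t^2=(-3\pm\sqrt{5})/2$; these lie off the unit circle and so are not roots of unity. For $\delta(3,1)$ the numerator $1+t^2+3t^3+t^4+t^6$ has no rational zero, is divisible by none of $\Phi_3,\Phi_4,\Phi_6$ (the only candidate cyclotomic factors of small degree), and coincides with none of $\Phi_7,\Phi_9,\Phi_{14},\Phi_{18}$, so it possesses no cyclotomic factorization whatsoever. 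The remaining tuples $\delta(6)$, $\delta(4,1)$, $\delta(3,2)$, $\delta(5)$ are handled by analogous cyclotomic factorization checks on the numerators displayed in Example \ref{Weitzenboeck for d up to 7}; a preliminary verification that numerator and denominator share no roots on the unit circle shows that each series is already in lowest terms (up to trivial factors), so no hidden cancellation can rescue the complete intersection form.

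The main obstacle is therefore not conceptual but purely computational: for each of the six numerator polynomials one must certify the presence of a non-cyclotomic irreducible factor. The case $\delta(5)$ is the most laborious, since its numerator has degree $15$, and this case was already settled by Onoda \cite{O}. For the five new cases the numerators have degree at most $10$, and the factorization check is a short finite verification—essentially the enumeration of cyclotomic polynomials whose degrees can sum to the degree of the numerator, followed by a direct comparison. Once these checks are in place, the corollary follows immediately from the criterion stated above.
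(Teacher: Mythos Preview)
Your proposal is correct and follows essentially the same approach as the paper: invoke the criterion (attributed to \cite{S2} and used in \cite{O}) that the numerator of the Hilbert series of a complete intersection can have only roots of unity as zeros, then appeal to the explicit Hilbert series listed in Example~\ref{Weitzenboeck for d up to 7}. The paper's proof is a single sentence and leaves the case-by-case cyclotomic checks implicit; you spell them out, which is a welcome addition rather than a departure.
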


\begin{proof}
Using, as in \cite{O}, that the zeros of the nominator of the Hilbert series of
a complete intersection are roots of unity (see \cite{S2}),
the proof follows immediately from
Example \ref{Weitzenboeck for d up to 7}. (The case $(d_1,\ldots,d_k)=(5)$ was
established in \cite{O}.)
\end{proof}

\section{PI-algebras and noncommutative invariant theory}

In this section we assume that all algebras are unital (and $\text{char}(K)=0$).
For a background on PI-algebras we refer, e.g., to \cite{D2}.
Let $Y_{\infty}=\{y_1,y_2,\ldots\}$ and let $K\langle Y_{\infty}\rangle$
be the free associative algebra of countable rank freely generated by $Y$.
This is the algebra of polynomials in infinitely many noncommutative
variables. Let $K\langle Y\rangle=K\langle y_1,\ldots,y_d\rangle$
be its subalgebra of rank $d$. Recall that $f(y_1,\ldots,y_m)\in K\langle Y_{\infty}\rangle$
is called a polynomial identity for the associative algebra $R$ if
$f(r_1,\ldots,r_m)=0$ for all $r_1,\ldots,r_m\in R$. If $R$ satisfies a nonzero polynomial identity,
it is called a {\it PI-algebra}. We denote by $T_{\infty}(R)$ the ideal of all polynomial identities
of $R$ (called the {\it T-ideal of} $R$) and
\[
T(R)=K\langle Y\rangle\cap T_{\infty}(R)
\]
is the T-ideal of the polynomial identities in $d$ variables for $R$.
Since we work over a field of characteristic 0, all polynomial identities of $R$
follow from the multilinear ones. The vector space of the multilinear polynomials
of degree $n$
\[
P_n=\text{span}\{y_{\sigma(1)}\cdots y_{\sigma(n)}\mid \sigma\in S_n\}\subset K\langle Y_{\infty}\rangle
\]
has a natural structure of a left $S_n$-module and the factor space
\[
P_n(R)=P_n/(P_n\cap T_{\infty}(R))
\]
is its $S_n$-factor module. One of the main problems in the quantitative study of PI-algebras
is to compute the {\it cocharacter sequence of} $R$
\[
\chi_n(R)=\chi_{S_n}(P_n(R))=\sum_{\lambda\vdash n}m_{\lambda}(R)\chi_{\lambda},
\]
where $\chi_{\lambda}$, $\lambda\vdash n$, is the irreducible $S_n$-character indexed by
the partition $\lambda$. A possible way to compute the multiplicities $m_{\lambda}(R)$ is the following.
One considers the diagonal $GL_d$-action on $K\langle Y\rangle$ extending the natural action of $GL_d$
on the $d$-dimensional vector space $KY$ with basis $Y$. Then the factor algebra
\[
F(R)=K\langle Y\rangle/T(R)
\]
called the {\it relatively free algebra of rank $d$ in the variety of associative algebras generated by} $R$,
inherits the $GL_d$-action of $K\langle Y\rangle$. Its Hilbert series as a $GL_d$-module coincides with
its Hilbert series as a ${\mathbb Z}^d$-graded vector space with grading defined by
\[
\deg(y_i)=(\underbrace{0,\ldots,0}_{i-1 \text{ times}},1,\underbrace{0,\ldots,0}_{d-i \text{ times}}).
\]
It is a symmetric function in $d$ variables and
\[
H(F(R);X)=\sum m_{\lambda}(R)S_{\lambda}(X),
\]
where the sum is on all $(\lambda_1,\ldots,\lambda_d)$ and the multiplicities $m_{\lambda}(R)$ are exactly the same
as in the cocharacter sequence of $R$. Hence, if we know the Hilbert series of $F(R)$, we can compute the multiplicities
$m_{\lambda}(R)$ for partitions $\lambda$ in $\leq d$ parts. The theorem of Belov \cite{Be} gives that for any
PI-algebra $R$ the Hilbert series of $F(R)$ is a rational function. Berele \cite{B2} found that the proof of Belov
also implies that this Hilbert series is a nice rational symmetric function. Hence we can apply our methods to
calculate the multiplicity series of $H(F(R);X)$ and to find the multiplicities of $R$.
See the introduction of Boumova and Drensky \cite{BD} for a survey of results on the multiplicities of concrete algebras.

The most important algebras in PI-theory are the so called {\it T-prime algebras}
whose T-ideals are the building blocks of the structure theory of T-ideals developed by Kemer,
see his book \cite{Ke} for the account. There are few cases only when the Hilbert series of
the relatively free algebras $F(R)$ are explicitly known.
For T-prime algebras these are the base filed $K$,
the Grassmann (or exterior) algebra $E$, the $2\times 2$ matrix algebra $M_2(K)$, and the algebra
$M_{1,1}\subset M_2(E)$ which has the same polynomial identities as the tensor square $E\otimes_KE$ of the Grassmann algebra.
In all these cases the multiplicities are also known.
The case $R=K$ is trivial because $F(K)=K[Y]$:
\[
m_{\lambda}(K)
=\begin{cases}
1,\text{ if }\lambda=(n),\\
0,\text{ otherwise.}\\
\end{cases}
\]
The multiplicities for $M_2(K)$ were obtained by Formanek \cite{F1} and Drensky \cite{D1}, see also \cite{D2}:
\[
m_{\lambda}(M_2(K))=
\begin{cases}
0,\text{ if } \lambda_5>0,\\
1,\text{ if }\lambda=(n),\\
(\lambda_1-\lambda_2+1)\lambda_2,\text{ if } \lambda=(\lambda_1,\lambda_2),\lambda_2>0,\\
\lambda_1(2-\lambda_4)-1,\text{ if } \lambda=(\lambda_1,1,1,\lambda_4),\\
(\lambda_1-\lambda_2+1)(\lambda_2-\lambda_3+1)(\lambda_3-\lambda_4+1)
\text{ in all other cases.}\\
\end{cases}
\]
Hence the multiplicity series are
\begin{eqnarray*}
M(H(F(K));X)&=&\frac{1}{1-x_1},
\\
 M'(H(F(K));V)&=&\frac{1}{1-v_1};
\\
M'(H(F(M_2(K)));V)&=&\frac{1}{(1-v_1)^2(1-v_2)^2(1-v_3)^2(1-v_4)}
\\
&&\mbox{}-\frac{v_2+v_1(1-v_2)}{(1-v_1)^2(1-v_2)}-\frac{v_3+v_4}{1-v_1}.
\end{eqnarray*}
For any $d$, there are partitions $\lambda=(\lambda_1,\ldots,\lambda_d)$ with $\lambda_d>0$
and nonzero multiplicities $m_{\lambda}(E)$ and $m_{\lambda}(E\otimes_K E)$. Hence for these cases
the multiplicity series $M(H(E);X)$ and $M(H(E\otimes_KE);X)$ do not carry all the information about
the cocharacter sequences of $E$ and $E\otimes_KE$. For this purpose Berele \cite{B3} suggested to use
hook Schur functions instead of ordinary ones.

Another case when the Hilbert series of the relatively free algebras may be computed and used
to find the multiplicities is for algebras $R$ with T-ideals which
are products of two T-ideals, $T(R)=T(R_1)T(R_2)$. See again \cite{BD} for details.
Formanek \cite{F2} found the following simple formula for the Hilbert
series of $T(R)$ in terms of the Hilbert series of $T(R_1)$ and $T(R_2)$:
\[
H(T(R))=\frac{H(T(R_1))H(T(R_2))}{H(K\langle Y\rangle)}=(1-(x_1+\cdots+x_d))H(T(R_1))H(T(R_2)).
\]
Translated for the corresponding relatively free algebras this gives
\[
H(F(R))=H(F(R_1))+H(F(R_2))+((x_1+\cdots+x_d)-1)H(F(R_1))H(F(R_2)).
\]
It is known that $T(U_m(K))=T^m(K)$ (Maltsev \cite{Ma}) and $T(U_m(E))=T^m(E)$ (this follows from the results of Abakarov \cite{Ab}),
where $U_k(K)$ and $U_k(E)$ are the algebras of $k\times k$ upper triangular matrices
with entries from $K$ and $E$, respectively. The multiplicities of $U_k(K)$ were studied
by Boumova and Drensky \cite{BD}, with explicit results for ``large'' partitions
$\lambda=(\lambda_1,\ldots,\lambda_n)$
(such that $\lambda_{k+1}+\cdots+\lambda_n=k-1$). The multiplicities of $U_2(E)$
were determined by Centrone \cite{Ce}. In both cases the results were obtained
using the Young rule only, without the MacMahon partition analysis.
Here we shall illustrate once again Algorithm \ref{algorithm of Xin}.

\begin{example}\label{metabelian algebras}
Let $C$ be the commutator ideal of the free associative algebra $K\langle Y\rangle$.
Then by Maltsev \cite{Ma} the T-ideal $C^k$ coincides with the T-ideal of $U_k(K)$. Since
$K\langle Y\rangle/C$ is the polynomial algebra in $d$ variables and
\[
H(K[Y];X)=\prod_{i=1}^d\frac{1}{1-x_i}=\sum_{n\geq 0}S_{(n)}(X),
\]
the formula
\[
H(F(R))=H(F(R_1))+H(F(R_2))+((x_1+\cdots+x_d)-1)H(F(R_1))H(F(R_2)).
\]
for the Hilbert series of relatively free algebras corresponding to products of T-ideals gives
\[
H(F(U_2(K));X)=2\prod_{i=1}^d\frac{1}{1-x_i}+((x_1+\cdots+x_d)-1)\prod_{i=1}^d\frac{1}{(1-x_i)^2}.
\]
The decomposition of the product $S_{\mu}(X)S_{(n)}(X)=\sum S_{\lambda}(X)$ is given by the Young rule.
If $\mu$ is a partition in $k$ parts, then $\lambda$ is a partition in $k$ or $k+1$ parts. Hence
$H(F(U_2(K));X)$ decomposes into a series of Schur functions $S_{\lambda}(X)$, where $\lambda$ is a partition
in no more than three parts. Therefore, it is sufficient to consider the multiplicity series of $H(F(U_2(K));X)$
for $d=3$ only. Clearly,
\[
M'(H(K[Y]);V)=M'\left(\prod_{i=1}^d\frac{1}{1-x_i};V\right)=\frac{1}{1-v_1}.
\]
Algorithm \ref{algorithm of Xin} gives
\[
g_1(x_1,x_2,x_3)=\frac{(x_1-x_2)(x_1-x_3)(x_2-x_3)((x_1+x_2+x_3)-1)}{(1-x_1)^2(1-x_2)^2(1-x_3)^2},
\]
\[
g_1(x_1z_1,x_2/z_1,x_3)
=\frac{-x_3+x_1x_2-2x_1x_2x_3-x_1x_3z_1}{(1-x_3)^2}
\]
\[
+\frac{x_3+x_3^2-x_3^3-x_1x_2+x_1x_2x_3-x_1^2x_2^2x_3}{(1-x_1x_2)(1-x_3)^2(1-x_1z_1)}
+\frac{-x_3^2+x_1^2x_2^2}{(1-x_1x_2)(1-x_3)(1-x_1z_1)^2}
\]
\[
+\frac{x_2x_3}{(1-x_3)^2z_1}
+\frac{(x_3-x_3^2+x_3^3-x_1x_2+x_1x_2x_3+2x_1^2x_2^2-3x_1^2x_2^2x_3)x_2}{(1-x_1x_2)(1-x_3)^2(x_2-z_1)}
\]
\[
+\frac{(x_3^2-x_1^2x_2^2)x_2^2}{(1-x_1x_2)(1-x_3)(x_2-z_1)^2}.
\]
We omit the last three summands which give negative degrees of $z_1$ in the expansion of
$g_1(x_1z_1,x_2/z_1,x_3)$ as a Laurent series and, substituting $z_1=1$, we obtain
\[
g_2(x_1,x_2,x_3)
=\frac{-x_3+x_1x_2-2x_1x_2x_3-x_1x_3}{(1-x_3)^2}
\]
\[
+\frac{x_3+x_3^2-x_3^3-x_1x_2+x_1x_2x_3-x_1^2x_2^2x_3}{(1-x_1)(1-x_1x_2)(1-x_3)^2}
+\frac{-x_3^2+x_1^2x_2^2}{(1-x_1)^2(1-x_1x_2)(1-x_3)}.
\]
Repeating the procedure with $g_2(x_1,x_2z_2,x_3/z_2)$ we obtain
\[
\mathop{\Omega}\limits_{\geq}(g_1(x_1z_1,x_2z_2/z_1,x_3/z_2))
=\frac{x_1^2x_2(-1+x_1+2x_1x_2-x_1^2x_2+x_1x_2x_3)}{(1-x_1)^2(1-x_1x_2)},
\]
\[
M'\left(\frac{(x_1+x_2+x_3)-1}{(1-x_1)^2(1-x_2)^2(1-x_3)^2};V\right)
=\frac{-1+v_1+2v_2-v_1v_2+v_3}{(1-v_1)^2(1-v_2)}
\]
\[
=-\frac{1}{1-v_1}+\frac{v_2+v_3}{(1-v_1)^(1-v_2)},
\]
\[
M'(H(F(U_2(K));V)=\frac{1}{1-v_1}+\frac{v_2+v_3}{(1-v_1)^(1-v_2)}
\]
\[
=\sum_{n\geq 0}v_1^n+\sum_{p\geq 0}\sum_{q\geq 1}(p+1)v_1^pv_2^q+\sum_{p\geq 0}\sum_{q\geq 0}(p+1)v_1^pv_2^qv_3.
\]
Hence the multiplicities in the cocharacter sequence of $U_2(K)$ are
\[
m_{\lambda}(U_2(K))=\begin{cases}
1,\text{ if }\lambda=(\lambda_1),\\
\lambda_1-\lambda_2+1,\text{ if }\lambda=(\lambda_1,\lambda_2),\lambda_2>0,\\
\lambda_1-\lambda_2+1,\text{ if }\lambda=(\lambda_1,\lambda_2,1),\\
0 \text{ in all other cases.}\\
\end{cases}
\]
Compare our approach with the approach on the multiplicities of $U_2(K)$ given by Mishchenko, Regev, and Zaicev \cite{MRZ}
and in \cite{BD}.
\end{example}

A case of products of T-ideals when we do need the MacMahon partition analysis
is of block triangular matrices with entries from the field.
Let $d_1,\ldots,d_m$ be positive integers and let $U(d_1,\ldots,d_m)$
be the algebra of matrices of the form
\[
\left(
\begin{matrix}
M_{d_1}(K) &\ast & \dots &\ast & \ast \\
0 & M_{d_2}(K) &\dots &\ast &\ast \\
\vdots & \vdots& \ddots &\vdots & \vdots \\
0&0&\dots&M_{d_{m-1}}(K)&\ast\\
0 &0& \dots & 0 & M_{d_m}(K)\\
\end{matrix}
\right).
\]
It is known, see Giambruno and Zaicev \cite{GZ}, that
\[
T(U(d_1,\ldots,d_m))=T(M_{d_1}(K))\cdots T(M_{d_m}(K)).
\]
The only cases when we know the Hilbert series of $T(M_k(K))$ are $k=1,2$,
and we can compute the Hilbert series of $F(U(d_1,\ldots,d_m))$.
The multiplicities of $U(d_1,\ldots,d_m)$ when all $d_i$ are equal to 1 and 2
were studied in the master thesis of Kostadinov \cite{K}, see also his paper with Drensky \cite{DK}.
If $d_1=\cdots=d_m=1$, the algebra $U(d_1,\ldots,d_m)$ is equal to $U_k(K)$, handled in \cite{BD}.
If only one $d_i$ is equal to 2 and the others are equal to 1, we still can use the Young rule.
The MacMahon partition analysis was applied in \cite{K} in the case when several $d_i$ are equal to 2.
In particular exact formulas for the multiplicity series and the multiplicities as well as the asymptotics
of the multiplicities were found for a small number of blocks.

Studying the polynomial identities of the matrix algebra $M_k(K)$, there is another object which behaves
much better than the relatively free algebra $F(M_k(K))$. Let
\[
K[Z]=K[z_{pq}^{(i)}\mid p,q=1,\ldots,k, i=1,\ldots,d]
\]
be the polynomial algebra in $k^2d$ commuting variables and let $R_{kd}$ be {\it the generic matrix algebra}
generated by the $d$ generic $k\times k$ matrices
\[
z_i=\left(\begin{matrix}
z_{11}^{(i)}&\cdots&z_{1k}^{(i)}\\
\vdots&\ddots&\vdots\\
z_{k1}^{(i)}&\cdots&z_{kk}^{(i)}\\
\end{matrix}\right),
\quad i=1,\ldots,d.
\]
It is well known that $R_{kd}\cong F(M_k(K))$. Let $C_{kd}$ be {\it the pure} (or {\it commutative}) {\it trace algebra}
generated by all traces of products $\text{tr}(z_{i_1}\cdots z_{i_n})$, $i_1,\ldots,i_n=1,\ldots,d$.
It coincides with the algebra of invariants $K[Z]^{GL_k}$ where the action of $GL_k$ on $K[Z]$ is induced
by the action of $GL_k$ on the generic matrices $z_1,\ldots,z_d$
by simultaneous conjugation. Hence one may study $C_{kd}$ with methods of the classical invariant theory.
{\it The mixed} (or {\it noncommutative}) {\it trace algebra} $T_{kd}=C_{kd}R_{kd}$ also has a meaning in invariant theory.
See the books by Formanek \cite{F4}, and also with Drensky \cite{DF}, for a background on trace algebras.
The mixed trace algebra approximates quite well the algebra $F(M_k(K))$. In particular,
one may consider the multilinear components of the pure and mixed trace algebras $C_k=C_{k,\infty}$ and $T_k=T_{k,\infty}$
of infinitely many generic $k\times k$ matrices and the related sequences
\[
\chi_n(C_k)=\sum_{\lambda\vdash n}m_{\lambda}(C_k)\chi_{\lambda},\quad
\chi_n(T_k)=\sum_{\lambda\vdash n}m_{\lambda}(T_k)\chi_{\lambda},\quad n=0,1,2,\ldots,
\]
of $S_n$-characters called {\it the pure and mixed
cocharacter sequences}, respectively.
Formanek \cite{F3} showed that the multiplicities $m_{\lambda}(T_k)$ in the mixed cocharacter sequence
and $m_{\lambda}(M_k(K))$ in the ordinary cocharacter sequence
for $M_k(K)$ coincide for all partitions $\lambda=(\lambda_1,\ldots,\lambda_{k^2})$ with $\lambda_{k^2}\geq 2$.
The only case when the pure and mixed cocharacter sequences are known is for $n=2$
due to Procesi \cite{P} and Formanek \cite{F1} (besides the trivial case of $k=1$). We state the result for $T_2$ only.
\[
m_{\lambda}(T_2)=\begin{cases}
(\lambda_1-\lambda_2+1)(\lambda_2-\lambda_3+1)(\lambda_3-\lambda_4+1), \text{ if }\lambda=(\lambda_1,\lambda_2,\lambda_3,\lambda_4),\\
0\text{ otherwise,}
\end{cases}
\]
The situation with the Hilbert series of $C_{kd}$ and $T_{kd}$ is better. The case $k=2$ was handled by Procesi \cite{P} and Formanek \cite{F1}:
\begin{eqnarray*}
H(C_{2d};X)&=&\prod_{i=1}^d\frac{1}{1-x_i}\sum_{p,q,r\geq 0}S_{(2p+2q+r,2q+r,r)}(X),
\\
H(T_{2d};X)&=&\prod_{i=1}^d\frac{1}{1-x_i}\sum_{(\lambda_1,\lambda_2,\lambda_3)}S_{(\lambda_1,\lambda_2,\lambda_3)}(X)
\\
&=&\prod_{i=1}^d\frac{1}{(1-x_i)^2}\sum_{n\geq 0}S_{(n,n)}(X).
\end{eqnarray*}
The Molien--Weyl formula gives that the Hilbert series
of $C_{kd}$ and $T_{kd}$ can be expressed as multiple integrals but for
$k\geq 3$ their direct evaluation is quite difficult and was performed
by Teranishi \cite{T1, T2} for $C_{32}$ and $C_{42}$ only.
Van den Bergh \cite{VdB} found a graph-theoretical approach for the
calculation of $H(C_{kd})$ and $H(T_{kd})$.
Berele and Stembridge \cite{BS} calculated the Hilbert series of $C_{kd}$ and $T_{kd}$ for
$k=3$, $d\leq 3$ and of $T_{42}$, correcting also some
typographical errors in the expression of $H(C_{42})$ in \cite{T2}.
Recently  Djokovi\'c \cite{Dj2} computed the Hilbert series
of $C_{k2}$ and $T_{k2}$ for $k=5$ and 6.

Using the Hilbert series of $C_{32}$, Berele
\cite{B1} found an asymptotic expression of
$m_{(\lambda_1,\lambda_2)}(C_3)$. The explicit form of multiplicity series
of the Hilbert series of $C_{32}$ was found by
Drensky and Genov \cite{DG1} correcting also a minor technical error (a missing summand) in \cite{B1}.
The quite technical method of \cite{DG1} was improved in \cite{DG2} and applied by Drensky, Genov and Valenti \cite{DGV}
to compute the multiplicity series of $H(T_{32})$ and by Drensky and Genov \cite{DG3} for the multiplicity series
of $H(C_{42})$ and $H(T_{42})$. In principle, the same methods work for the multiplicities of the Hilbert series of
$H(C_{k2})$ and $H(T_{k2})$, $k=5,6$.

\begin{example}\label{multiplicities of T32}
We shall apply the MacMahon partition analysis to find the multiplicities of $H(T_{32})$.
By Berele and Stembridge \cite{BS}
\[
H(T_{32},x_1,x_2)
=\frac{1}{(1-x_1)^2(1-x_2)^2(1-x_1^2)(1-x_2^2)(1-x_1x_2)^2(1-x_1^2x_2)(1-x_1x_2^2)}.
\]
As in Example \ref{multiplicities of KW3 from DG1}
we define the function
\[
g(x_1,x_2)=(x_1-x_2)H(T_{32};x_1,x_2)
\]
and decompose $g(x_1z,x_2/z)$ as a sum of partial fractions with respect to $z$. The result is
\[
\frac{1}{2(1-x_1x_2)^6(1+x_1x_2)^2(1-x_1z)^3}
+\frac{1+2x_1x_2-5x_1^2x_2^2}{4(1-x_1x_2)^7(1+x_1x_2)^3(1-x_1z)^2}
\]
\[
-\frac{1+2x_1x_2-10x_1^2x_2^2+10x_1^3x_2^3-7x_1^4x_2^4}{8(1-x_1x_2)^8(1+x_1x_2)^4(1-x_1z)}
\]
\[
-\frac{1}{8(1-x_1x_2)^2(1+x_1x_2)^4(1+x_1^2x_2^2)(1+x_1z)}
\]
\[
-\frac{x_1^3x_2^3}{(1-x_1x_2)^8(1+x_1x_2)^4(1+x_1^2x_2^2)(1-x_1^2x_2z)}
+\frac{x_2^3}{2(1-x_1x_2)^6(1+x_1x_2)^2(x_2-z)^3}
\]
\[
+\frac{x_2^2(-5+2x_1x_2+x_1^2x_2^2)}{4(1-x_1x_2)^7(1+x_1x_2)^3(x_2-z)^2}
+\frac{x_2(7-10x_1x_2+10x_1^2x_2^2-2x_1^3x_2^3-x_1^4x_2^4)}{8(1-x_1x_2)^8(1+x_1x_2)^4(x_2-z)}
\]
\[
-\frac{x_2}{8(1-x_1x_2)^2(1+x_1^2x_2^2)(1+x_1x_2)^4(x_2+z)}
\]
\[
-\frac{x_1^4x_2^5}{(1-x_1x_2)^8(1+x_1x_2)^4(1+x_1^2x_2^2)(x_1x_2^2-z)}.
\]
We remove the last five summands because their expansions as Laurent series contain
negative degrees of $z$ only. Then we replace $z$ by 1 and obtain
\[
\mathop{\Omega}\limits_{\geq}(g(x_1z,x_2/z))=
\frac{1}{2(1-x_1x_2)^6(1+x_1x_2)^2(1-x_1)^3}
\]
\[
+\frac{1+2x_1x_2-5x_1^2x_2^2}{4(1-x_1x_2)^7(1+x_1x_2)^3(1-x_1)^2}
-\frac{1+2x_1x_2-10x_1^2x_2^2+10x_1^3x_2^3-7x_1^4x_2^4}{8(1-x_1x_2)^8(1+x_1x_2)^4(1-x_1)}
\]
\[
-\frac{1}{8(1-x_1x_2)^2(1+x_1x_2)^4(1+x_1^2x_2^2)(1+x_1)}
\]
\[
-\frac{x_1^3x_2^3}{(1-x_1x_2)^8(1+x_1x_2)^4(1+x_1^2x_2^2)(1-x_1^2x_2)}.
\]
Dividing $\mathop{\Omega}\limits_{\geq}(g(x_1z,x_2/z))$ by $x_1$ and after the substitution $v_1=x_1$, $v_2=x_1x_2$ we obtain
\[
M'(H(T_{32});v_1,v_2)=\frac{v_1^3h_3(v_2)+v_1^2h_2(v_2)+v_1h_1(v_2)+h_0(v_2)}{(1-v_1)^3(1+v_1)(1-v_1v_2)(1-v_2)^7(1+v_2)^4(1+v_2^2)},
\]
\[
h_3(v_2)=v_2^2(v_2^4-v_2^3+3v_2^2-v_2+1),\quad h_2(v_2)= v_2(2v_2^4-4v_2^3+v_2^2-v_2-1),
\]
\[
h_1(v_2)=v_2(-v_2^4-v_2^3+v_2^2-4v_2+2), \quad h_0(v_2)=v_2^4-v_2^3+3v_2^2-v_2+1,
\]
which coincides with the result of \cite{DGV}. The multiplicity series has also the form
\[
M'(H(T_{32});v_1,v_2)=\frac{a_3(v_2)}{(1-t)^3}+\frac{a_2(v_2)}{(1-t)^2}
+\frac{a_1(v_2)}{(1-t)}+\frac{b(v_2)}{1+t}+\frac{c(v_2)}{1-v_2t},
\]
where
\[
a_3(v_2)=\frac{1}{2(1-v_2)^6(1+v_2)^2},\quad
a_2(v_2)=\frac{(3v_2^2-2v_2+1)}{2^2(1-v_2)^7(1+v_2)^3},
\]
\[
a_1(v_2)=\frac{(v_2^4-6v_2^3+14v_2^2-6v_2+1)} {2^3(1-v_2)^8(1+v_2)^4},
\]
\[
b(v_2)=\frac{1}{2^3(1-v_2)^2(1+v_2)^4(1+v_2^2)}, \quad
c(v_2)=\frac{-v_2^4}{(1-v_2)^8(1+v_2)^4(1+v_2^2)}.
\]
\end{example}

In a forthcoming paper by Benanti, Boumova and Drensky \cite{BBD}  we shall apply our methods
to find the multiplicities in the pure and mixed cocharacter sequence of three generic $3\times 3$ matrices.

One of the directions of noncommutative invariant theory is to study subalgebras of invariants
of linear groups acting on free and relatively free algebras. For a background see the surveys by
Formanek \cite{F2} and Drensky \cite{D3}. Recall that we consider the action of $GL_d$ on
the vector space $KY$ with basis $Y=\{y_1,\ldots,y_d\}$ and extend this action diagonally on
the free algebra $K\langle Y\rangle$ and the relatively free algebras $F(R)$, where $R$ is a PI-algebra.
Let $G$ be a subgroup of $GL_d$.
Then $G$ acts on $F(R)$ and the algebra of $G$-invariants is
\[
F(R)^G=\{f(Y)\in F(R)\mid g(f)=f\text{ for all }g\in G\}.
\]
Comparing with commutative invariant theory, when $K[Y]^G$ is finitely generated for all ``nice'' groups (e.g., finite
and reductive), the main difference in the noncommutative case is that $F(R)^G$ is finitely generated quite rarely.
For a survey on invariants of finite groups $G$ see \cite{D3, F2} and the survey by Kharlampovich and Sapir \cite{KS}.
For a fixed PI-algebra $R$ there are many conditions
which are equivalent  to the fact that the algebra $F(R)^G$ is finitely generated for all finite groups $G$.
Maybe the simplest one is that this happens if and only if
$F(R)^G$ is finitely generated for $d=2$ and the cyclic group $G=\langle g\rangle$ of order 2 generated by the matrix
\[
g=\left(\begin{matrix}
-1&0\\
0&1\\
\end{matrix}\right).
\]
Clearly, in this case $F(R)^{\langle g\rangle}$ is spanned on all monomials in $y_1,y_2$ which are of even degree
with respect to $y_1$. For the algebras of invariants $F(R)^G$ of reductive groups $G$ see Vonessen \cite{V}
and Domokos and Drensky \cite{DD}.

Concerning the Hilbert series of $F(R)^G$, for $G$ finite there is an analogue of the Molien formula, see Formanek \cite{F2}:
If $\xi_1(g),\ldots,\xi_d(g)$ are the eigenvalues of $g \in G$, then
the Hilbert series of the algebra of invariants $F(R)^G$ is
\[
H(F(R)^G;t) = \frac{1}{\vert G\vert}\sum_{g\in G}H(F(R);\xi_1(g)t,\ldots,\xi_d(g)t).
\]
Combined with the theorem of Belov \cite{Be} for the rationality of $H(F(R);X)$ (as specified by Berele \cite{B2})
this gives that the Hilbert series of
$F(R)^G$ is a nice rational function for every finite group $G$.
By a result of Domokos and Drensky \cite{DD} the Hilbert series of $F(R)^G$ for a reductive group $G$
is a nice rational function if $R$ satisfies a nonmatrix polynomial identity
(i.e., an identity which does not hold for the algebra $M_2(K)$ of $2\times 2$ matrices).
The proof uses that for algebras $R$ with nonmatrix identity the relatively free algebra has a finite series of graded ideals
with factors which are finitely generated modules of polynomial algebras. This allows to reduce the considerations to the commutative case
when the rationality of the Hilbert series is well known. We believe that the careful study of the proof of Belov \cite{Be}
would give that $H(F(R)^G;t)$ is a nice rational function for every reductive group $G$ and an arbitrary PI-algebra $R$.

Let $W$ be a $p$-dimensional $GL_d$-module with basis $Y_p=\{y_1,\ldots,y_p\}$. Consider the related representation
$\rho:GL_d\to GL_p$ of $GL_d$ in the $p$-dimensional vector space with this basis. If $F_p(R)$ is a relatively free algebra of rank
$p$ freely generated by $Y_p$, then the representation $\rho$ induces an action of $GL_d$ on $F_p(R)$.
The following theorem is a noncommutative analogue of Theorem \ref{SL- and UT-invariants}.

\begin{theorem}\label{noncommutative SL- and UT-invariants}
Let $W$ be a $p$-dimensional polynomial $GL_d$-module
with Hilbert series with respect to the grading
induced by the $GL_d$-action on $W$
\[
H(W;X)=\sum a_ix_1^{i_1}\cdots x_d^{i_d},
\quad a_i\geq 0,a_i\in\mathbb Z,\sum a_i=p.
\]
Let $R$ be a PI-algebra with the corresponding relatively free algebra $F_p(R)$ of rank $p$ freely generated by $Y_p$,
with the natural structure of a $GL_d$-module induced by the $GL_d$-action on $W$.
Let
\[
f(X,t)=H(F_p(R);X^{i^{(1)}}t,\ldots,X^{i^{(p)}}t)
\]
be the formal power series obtained from
the Hilbert series $H(F_p(R);x_1,\ldots,x_p)$ of $F_p(R)$
by substitution of the variables
$x_j$ with $x_1^{i_1}\cdots x_d^{i_d}t$ in such a way that each $x_1^{i_1}\cdots x_d^{i_d}t$ appears exactly $a_i$ times.
Then the Hilbert series of the algebras $F_p(R)^{SL_d}$ and $F_p(R)^{UT_d}$ of $SL_d$- and $UT_d$-invariants are,
respectively,
\[
H(F_p(R)^{SL_d};t)=M'(f;0,\ldots,0,1,t),
\]
\[
H(F_p(R)^{UT_d};t)=M(f;1,\ldots,1,t)=M'(f;1,\ldots,1,t),
\]
where $M(f;X,t)$ and $M'(f;V,t)$ are the multiplicity series of the symmetric in $X$ function $f(X,t)$.
\end{theorem}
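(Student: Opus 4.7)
The plan is to mirror the proof of Theorem \ref{SL- and UT-invariants} almost verbatim, the main preparatory task being to verify that $f(X,t)$ correctly encodes the bigraded Hilbert series of $F_p(R)$ with respect to both the $\mathbb{Z}$-grading by total degree in the generators $Y_p$ and the $\mathbb{Z}^d$-grading induced by the $GL_d$-action on $W$. Choose the basis $Y_p=\{y_1,\ldots,y_p\}$ of $W$ to consist of eigenvectors for the diagonal torus $D_d\subset GL_d$, so that each $y_j$ has a well-defined weight $i^{(j)}=(i_{j1},\ldots,i_{jd})$; the condition $\sum a_i=p$ in the statement simply records that exactly $a_i$ generators have weight $i$. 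Because $T(R)$ is a T-ideal, it is stable under every linear substitution of the free generators, hence in particular under the action of $\rho(GL_d)\subset GL_p$ on $K\langle Y_p\rangle$; therefore the quotient $F_p(R)$ inherits a polynomial $GL_d$-module structure, and its homogeneous components in either grading are finite dimensional.

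With this setup, a standard generating-function computation shows that substituting $x_j \mapsto X^{i^{(j)}}t$ in $H(F_p(R);x_1,\ldots,x_p)$ produces exactly the bigraded Hilbert series: a monomial $y_{j_1}\cdots y_{j_n}$ (mod $T(R)$) contributes a term of $\mathbb{Z}^d\times\mathbb{Z}$-degree $(i^{(j_1)}+\cdots+i^{(j_n)},\,n)$. Thus $f(X,t)$ is a symmetric function in $X$ with coefficients in $K[[t]]$, and decomposing $F_p(R)$ into irreducible $GL_d$-submodules gives
\[
F_p(R)=\bigoplus_{n\geq 0}\bigoplus_{\lambda}m_n(\lambda)W(\lambda),\qquad
f(X,t)=\sum_{n\geq 0}\left(\sum_{\lambda}m_n(\lambda)S_\lambda(X)\right)t^n,
\]
with multiplicity series
\[
M'(f;V,t)=\sum_{n\geq 0}\left(\sum_{\lambda}m_n(\lambda)v_1^{\lambda_1-\lambda_2}\cdots v_{d-1}^{\lambda_{d-1}-\lambda_d}v_d^{\lambda_d}\right)t^n.
\]

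Now I would invoke Lemma \ref{reduction of invariants to irreducible components} applied degree-by-degree to the finite-dimensional homogeneous components of $F_p(R)$: for any subgroup $H\leq GL_d$ and any polynomial $GL_d$-module decomposing as $\bigoplus m(\lambda)W(\lambda)$, the space of $H$-invariants equals $\bigoplus m(\lambda)W(\lambda)^H$. Since the irreducible $W(\lambda)$ contains a one-dimensional $SL_d$-invariant subspace precisely when $\lambda_1=\cdots=\lambda_d$ and none otherwise, evaluation of $M'(f;V,t)$ at $v_1=\cdots=v_{d-1}=0,\,v_d=1$ picks out exactly the terms with $\lambda_1-\lambda_2=\cdots=\lambda_{d-1}-\lambda_d=0$ and hence equals $H(F_p(R)^{SL_d};t)$. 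Similarly, every $W(\lambda)$ carries a one-dimensional $UT_d$-invariant (the highest weight line), so $H(F_p(R)^{UT_d};t)=\sum_n\sum_\lambda m_n(\lambda)t^n$, which is obtained by evaluating $M(f;X,t)$ at $X=(1,\ldots,1)$, equivalently $M'(f;V,t)$ at $V=(1,\ldots,1)$.

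The only nontrivial step is the verification that $F_p(R)$ really is a polynomial $GL_d$-module with finite-dimensional homogeneous pieces and complete reducibility; once that is in hand, the argument is a transparent translation of Theorem \ref{SL- and UT-invariants} into the noncommutative setting, with the bigrading playing the role that the ordinary $\mathbb{Z}$-grading on $K[W]$ played there. No new computational mechanism is needed, and the formulas for $H(F_p(R)^{SL_d};t)$ and $H(F_p(R)^{UT_d};t)$ drop out by the same evaluation trick on the multiplicity series.
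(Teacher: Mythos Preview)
Your proposal is correct and follows essentially the same route as the paper's proof: choose a weight basis $Y_p$ for the diagonal torus, identify $f(X,t)$ with the bigraded Hilbert series of $F_p(R)$, decompose into irreducible $GL_d$-modules, and then invoke the same facts about $SL_d$- and $UT_d$-invariants in $W(\lambda)$ used in Theorem \ref{SL- and UT-invariants}. The extra details you supply (stability of $T(R)$ under $\rho(GL_d)$, the explicit monomial-weight bookkeeping) are elaborations the paper leaves implicit, but the argument is the same.
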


\begin{proof}
We may choose the basis $Y_p$ of $W$ to consist of eigenvectors of the diagonal group $D_d$. Then
for a fixed $d$-tuple $i=(i_1,\ldots,i_d)$ exactly $a_i$ of the elements $y_j$ satisfy
\[
g(y_j)=\xi_1^{i_1}\cdots\xi_d^{i_d}y_j,\quad g=\text{diag}(\xi_1,\ldots,\xi_d)\in D_d.
\]
The monomials in $y_1,\ldots,y_p$ are eigenvectors of $D_d$ and $H(F_p(R);X^{i^{(1)}}t,\ldots,X^{i^{(p)}}t)$
is the Hilbert series of the $GL_d$-module $F_p(R)$ which counts also the $\mathbb Z$-grading of $F_p(R)$.
Now the proof is completed as the proof of Theorem \ref{SL- and UT-invariants} because the irreducible $GL_d$-submodule
$W(\lambda)$ contains a one-dimensional $SL_d$-invariant if $\lambda_1=\cdots=\lambda_d$ and does not contain any $GL_d$-invariants
otherwise. Similarly, $W(\lambda)$ contains a one-dimensional $UT_d$-invariant for every $\lambda$.
\end{proof}

Combined with the nice rationality of the Hilbert series of relatively free algebras
Theorem \ref{noncommutative SL- and UT-invariants} immediately gives:

\begin{corollary}\label{nice rational Hilbert series for invariants}
Let $W$ be a $p$-dimensional polynomial $GL_d$-module with basis $Y_p=\{y_1,\ldots,y_p\}$
and let $F_p(R)$ be the relatively free algebra freely generated by $Y_p$ and related to the PI-algebra $R$.
Then the Hilbert series of the algebras of invariants $H(F_p(R)^{SL_d};t)$ and $H(F_p(R)^{UT_d};t)$
are nice rational functions.
\end{corollary}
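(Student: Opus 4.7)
The plan is to chain together the nice rationality of $H(F_p(R);y_1,\ldots,y_p)$ with the preservation of nice rationality by the multiplicity-series construction, and finally by the two specializations appearing in Theorem \ref{noncommutative SL- and UT-invariants}. First, I would invoke the theorem of Belov \cite{Be} in the strengthened form of Berele \cite{B2} (cited in the introduction): for any PI-algebra $R$, the Hilbert series $H(F_p(R);y_1,\ldots,y_p)$ is a nice rational symmetric function in the $p$ variables $y_1,\ldots,y_p$.

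Next, I would carry out the substitution $y_j\mapsto X^{i^{(j)}}t$ prescribed in Theorem \ref{noncommutative SL- and UT-invariants}. Each denominator factor $1-Y^a$ in $H(F_p(R);Y)$ becomes a binomial of the form $1-X^bt^c$, so the resulting $f(X,t)$ is still nice rational, now in the variables $x_1,\ldots,x_d,t$. Symmetry of $f$ in $X$ follows because $H(F_p(R);Y)$ is $S_p$-symmetric in $Y$ while the multiset of weights $\{i^{(1)},\ldots,i^{(p)}\}$ of the $GL_d$-module $W$ is invariant under the natural $S_d$-action on $X$. Viewing $f(X,t)$ as an element of $K(t)[[X]]^{S_d}$ and applying Theorem \ref{Omega version of multiplicity series} (realized by iterating Algorithm \ref{algorithm of Xin} over the ground field $K(t)$), I obtain that the multiplicity series $M'(f;V,t)$ is again a nice rational function, i.e.\ a quotient $P(V,t)/Q(V,t)$ with $Q$ a product of binomials $1-V^at^b$.

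Finally, Theorem \ref{noncommutative SL- and UT-invariants} identifies
\[
H(F_p(R)^{SL_d};t)=M'(f;0,\ldots,0,1,t),\qquad H(F_p(R)^{UT_d};t)=M'(f;1,\ldots,1,t),
\]
so it remains to check that each of these two specializations yields a nice rational function of $t$ alone. Each factor $1-V^at^b$ either trivialises, becomes $1-t^b$, or (potentially) becomes $1-1=0$ when some $v_i$ is set to $1$, which is where the only subtlety lies.

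The main obstacle is precisely this last point: controlling the specialization of $M'(f;V,t)$ at values of $V$ on which some denominator factors of $Q$ vanish. I would handle this by observing that both specialised series are a priori well defined as formal power series in $t$ with nonnegative integer coefficients (since $F_p(R)^{SL_d}$ and $F_p(R)^{UT_d}$ have finite-dimensional homogeneous components in each $t$-degree, by the finite generation of each graded component of $F_p(R)$). Hence any denominator factor of $Q$ that vanishes on the specialised $V$ must be cancelled by the numerator; after this cancellation, every surviving denominator factor has the form $1-t^c$ with $c>0$, which is the required nice form in one variable. This completes the outline.
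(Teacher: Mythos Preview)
Your proposal is correct and follows essentially the same approach as the paper, which simply states that the corollary follows immediately by combining Theorem \ref{noncommutative SL- and UT-invariants} with the nice rationality of the Hilbert series of relatively free algebras (Belov \cite{Be}, Berele \cite{B2}). You have spelled out the chain of implications---substitution, multiplicity series, and specialization---more carefully than the paper does, including the specialization subtlety, which the paper leaves implicit.
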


\begin{example}\label{metabelian invariants}
We shall apply Theorem \ref{noncommutative SL- and UT-invariants}
to Example \ref{metabelian algebras}.
Let $d\geq 2$ and let $SL_d$ and $UT_d$ act as subgroups of $GL_d$ on the relatively free algebra $F(U_2(K))$ with $d$ generators.
Then the generators $y_i\in Y$ of $F(U_2(K))$ are of first degree with respect to the $GL_d$-action.
Hence
\begin{eqnarray*}
H(F(U_2(K));X,t)&=&H(F(U_2(K));Xt)=H(F(U_2(K));x_1t,\ldots,x_dt),
\\
M'(H(F(U_2(K));X,t);V,t)&=&M'(H(F(U_2(K));X,t);v_1t,v_2t^2,\ldots,v_dt^d)
\\
&=&\mbox{}\frac{1}{1-tv_1}+\frac{v_2t^2+v_3t^3}{(1-v_1t)^2(1-v_2)}.
\end{eqnarray*}
And therefore
\begin{eqnarray*}
H(F(U_2(K))^{SL_d};t)&=&M'(H(F(U_2(K));X,t);0,\ldots,0,t^d)
\\
&=&\begin{cases}
\displaystyle{\frac{1}{1-t^2}},\text{ if }d=2,\\
1+t^3,\text{ if }d=3,\\
1,\text{ if }d>3;\\
\end{cases}
\\
H(F(U_2(K))^{UT_d};t)&=&M'(H(F(U_2(K));X,t);t,t^2,\ldots,t^d)
\\
&=&\begin{cases}
\displaystyle{\frac{1-t+t^3}{(1-t)^2(1-t^2)}},\text{ if }d=2,\\
\\
\displaystyle{\frac{1-2t+2t^2}{(1-t)^3}},\text{ if }d\geq 3.\\
\end{cases}
\end{eqnarray*}
\end{example}

\begin{example}\label{nonlinear metabelian}
Again, let $R=U_2(K)$ and let $W=W(1^2)$ be the irreducible $GL_3$-module indexed by the partition $(1^2)=(1,1,0)$.
We consider the relatively free algebra $F_3(U_2(K))$ with the $GL_3$-action induced by the action on $W$.
The Hilbert series of $F_3(U_2(K))$ which counts both the action of $GL_3$ and the $\mathbb Z$-grading is
\begin{eqnarray*}
f(X,t)&=&H(F_3(U_2(K));x_1x_2t,x_2x_3t,x_2x_3t)
\\
&=&\frac{1}{(1-x_1x_2t)(1-x_1x_3t)(1-x_2x_3t)}
\\
&&\mbox{}+\frac{x_1x_2+x_1x_3+x_2x_3-1}{(1-x_1x_2t)^2(1-x_1x_3t)^2(1-x_2x_3t)^2}.
\end{eqnarray*}
Applying Theorem \ref{noncommutative SL- and UT-invariants} and Algorithm \ref{algorithm of Xin} we obtain
\[
M'(f;V,t)=\frac{1-v_2t+(v_1v_2+v_3)v_3t^3}{(1-v_2t)^2(1-v_1v_3t^2)}.
\]
Hence
\begin{eqnarray*}
H(F_3(U_2(K))^{SL_3};t)&=&M'(f;0,0,1,t)=1+t^3,
\\
H(F_3(U_2(K))^{UT_3};t)&=&M'(f;1,1,1,t)=\frac{1-2t+2t^2}{(1-t)^3}.
\end{eqnarray*}
Similarly, if we consider the $GL_3$-module $W=W(2)$, then $GL_3$ acts on $F_6(U_2(K))$ extending the action on $W$,
\[
f(X,t)=H(F_6(U_2(K));x_1^2t,x_2^2t,x_3^2t,x_1x_2t,x_2x_3t,x_2x_3t)
\]
and, applying again Theorem \ref{noncommutative SL- and UT-invariants} and Algorithm \ref{algorithm of Xin}, we obtain
\begin{eqnarray*}
H(F_6(U_2(K))^{ST_3};t)&=&\frac{1-3t^3+6t^6-2t^9}{(1-t^3)^4},
\\
H(F_6(U_2(K))^{UT_3};t)
&=&\frac{p(t)}{((1-t)(1-t^2)(1-t^3))^3},
\end{eqnarray*}
where
\[
p(t)=1-2t+7t^3+11t^4+6t^5-10t^6+t^7+6t^8+4t^9-2t^{10}-4t^{11}+2t^{12}.
\]
\end{example}

\begin{example}\label{invariants of trace algebras}
Let $R_{2p}$ be the algebra generated by $p$ generic $2\times 2$ matrices $z_1,\ldots,z_p$ with the canonical $GL_p$-action.
We extend the action of the pure and mixed trace algebras by
\[
g(\text{tr}(z_{i_1}\cdots z_{i_n})z_{j_1}\cdots z_{j_m})=\text{tr}(g(z_{i_1}\cdots z_{i_n}))g(z_{j_1}\cdots z_{j_m}),
\]
$z_{i_1}\cdots z_{i_n},z_{j_1}\cdots z_{j_m}\in R_{2p}$, $g\in GL_p$.
For a $p$-dimensional $GL_d$-module $W$, we consider the induced $GL_d$-action on $R_{2p}, C_{2p}$ and $T_{2p}$.
Let $d=2$. Then $W=W(2)\oplus W(0)$ is a 4-dimensional $GL_2$-module with Hilbert series
\[
H(W;x_1,x_2)=x_1^2+x_1x_2+x_2^2+1.
\]
The Hilbert series of $T_{24}$ is
\begin{eqnarray*}
H(T_{24};x_1,x_2,x_3,x_4)&=&\prod_{i=1}^4\frac{1}{(1-x_i)^2}\sum_{n\geq 0}S_{(n,n)}(X)
\\
&=&(1-x_1x_2x_3x_4)\prod_{i=1}^4\frac{1}{(1-x_i)^2}\prod_{1\leq i<j\leq 4}\frac{1}{1-x_ix_j}.
\end{eqnarray*}
Hence the Hilbert series of the $\mathbb Z$-graded $GL_2$-module $T_{24}$ is
\begin{eqnarray*}
f(x_1,x_2,t)&=&H(T_{24};x_1^2t,x_1x_2t,x_2^2t,t)
\\
&=&\frac{1-x_1^3x_2^3t^4}{((1-t)(1-x_1^2t)(1-x_1x_2t)(1-x_2^2t))^2}
\\
&&\mbox{}\times\frac{1}{(1-x_1^2t^2)(1-x_1x_2t^2)(1-x_2^2t^2)(1-x_1^3x_2t^2)(1-x_1^2x_2^2t^2)(1-x_1x_2^3t^2)}.
\end{eqnarray*}
Computing the multiplicity series $M'(f;v_1,v_2,t)$ and replacing $(v_1,v_2)$ with $(0,1)$ and $(1,1)$
we obtain, respectively, the Hilbert series of $T_{24}^{SL_2}$ and $T_{24}^{UT_2}$:
\begin{eqnarray*}
H(T_{24}^{SL_2};t)&=&\frac{1-t+t^2+2t^4+t^6-t^7+t^8}{(1-t)^3(1-t^2)^2(1-t^3)^3(1-t^4)^2},
\\
H(T_{24}^{UT_2};t)&=&\frac{(1-t+t^2)(1+3t^2+4t^3+6t^4+4t^5+3t^6+t^8)}{(1-t)^5(1-t^2)^2(1-t^3)^3(1-t^4)^2}.
\end{eqnarray*}
By considering the three-dimensional $GL_2$-module $W(2)$ and the induced $GL_2$-action on $T_{23}$,  we obtain
\[
H(T_{23};x_1,x_2,x_3)
=\frac{1}{(1-x_1)^2(1-x_2)^2(1-x_3)^2(1-x_1x_2)(1-x_1x_3)(1-x_2x_3)},
\]
\[
f(x_1,x_2,t)=H(T_{23};x_1^2t,x_1x_2t,x_2^2t)
\]
\[
=\frac{1}{((1-x_1^2t)(1-x_1x_2t)(1-x_2^2t))^2(1-x_1^3x_2t^2)(1-x_1^2x_2^2t^2)(1-x_1x_2^3t^2)},
\]
\[
H(T_{23}^{SL_2};t)=\frac{1+t^4}{(1-t^2)^3(1-t^3)^2(1-t^4)},
\]
\[
H(T_{23}^{UT_2};t)=\frac{1+2t^2+2t^3+2t^4+t^6}{(1-t)^2(1-t^2)^3(1-t^3)^2(1-t^4)}.
\]
\end{example}

As in the commutative case $K[Y]$ one may consider linear locally nilpotent derivations of
the free algebra $K\langle Y\rangle$ and of any relatively free algebra $F(R)$. Again, we call
such derivations Weitzenb\"ock derivations. There is a very simple condition when the algebra of
constants $F(R)^{\delta}$ of a nonzero Weitzenb\"ock derivation $\delta$ is finitely generated.
By a result of Drensky and Gupta \cite{DGu} if the T-ideal $T(R)$ of the polynomial identities of $R$
is contained in the T-ideal $T(U_2(K))$, then $F(R)^{\delta}$ is not finitely generated. The main result
of Drensky \cite{D4} states that if $T(R)$ is not contained in $T(U_2(K))$, then $F(R)^{\delta}$ is
finitely generated. For various properties and applications of Weitzenb\"ock derivations acting
on free and relatively free algebras see \cite{DGu}. The following theorem and its corollary combine Theorems
\ref{Weitzenboeck} and \ref{noncommutative SL- and UT-invariants}. We omit the proofs which repeat
the main steps of the proofs of these two theorems.

\begin{theorem}\label{noncommutative Weitzenboeck}
Let $\delta$ be a Weitzenb\"ock derivation of the relatively free algebra $F(R)$
with Jordan normal form consisting of $k$ cells
of size $d_1+1,\ldots,d_k+1$, respectively. Let
\[
f_{\delta}(x_1,x_2,t)=H(F(R);x_1^{d_1}t,x_1^{d_1-1}x_2t,\ldots,x_2^{d_1}t,\ldots,x_1^{d_k}t,\ldots,x_1x_2^{d_k-1}t,x_2^{d_k}t)
\]
be the function obtained from the Hilbert series of $F(R)$ by substitution
of the first group of $d_1+1$ variables $x_1,x_2,\ldots,x_{d_1+1}$
with $x_1^{d_1}t,x_1^{d_1-1}x_2t,\ldots,x_2^{d_k}t$,
the second group of $d_2+1$ variables $x_{d_1+2},x_{d_1+3},\ldots,x_{d_1+d_2+2}$ with
$x_1^{d_2}t,x_1^{d_2-1}x_2t,\ldots,x_2^{d_2}t$, $\ldots$, the $k$-th group of $d_k+1$ variables
$x_{d-d_k},\ldots,x_{d-1},x_d$ with $x_1^{d_k}t,\ldots,x_1x_2^{d_k-1}t,x_2^{d_k}t$.
Then the Hilbert series of the algebra of constants $F(R)^{\delta}$
is given by
\[
H(F(R)^{\delta};t)=M(f_{\delta};1,1),
\]
where $M(f_{\delta};x_1,x_2)$ is the multiplicity series of the symmetric with respect to $x_1,x_2$
function $f_{\delta}(x_1,x_2,t)\in K(t)[[x_1,x_2]]^{S_2}$.
Hence the Hilbert series $H(F(R)^{\delta};t)$ is a nice rational function.
\end{theorem}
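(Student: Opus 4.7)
The plan is to reduce this theorem to Theorem \ref{noncommutative SL- and UT-invariants} by identifying $\delta$ with the infinitesimal action of a unipotent subgroup of $GL_2$, in the same spirit as the proof of Theorem \ref{Weitzenboeck}. First, let $d = (d_1+1) + \cdots + (d_k+1)$ be the number of generators of $F(R)$ and let $KY$ be the $d$-dimensional vector space spanned by them. Up to a linear change of generators we may assume that $\delta$ is in its Jordan normal form. Inside the $i$-th Jordan block, say spanned by $y^{(i)}_0,\ldots,y^{(i)}_{d_i}$, we may rescale so that $\delta$ acts on this block by the transpose of the standard nilpotent $e_{12}$ in the $(d_i+1)$-dimensional irreducible representation $W(d_i)$ of $GL_2$; explicitly, one identifies $y^{(i)}_j$ with $x_1^{d_i-j}x_2^{j}$, whereupon $\delta$ becomes the infinitesimal generator of the one-parameter subgroup $\exp(\alpha\delta) \subset GL_2$ sitting inside $UT_2$. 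This gives an identification of $KY$ with the $GL_2$-module $W=W(d_1)\oplus\cdots\oplus W(d_k)$ whose associated additive unipotent action integrates $\delta$.

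Second, extending $\delta$ diagonally to $F(R)$ produces precisely the induced action of $UT_2$ on $F(R)=F_d(R)$ coming from the representation $\rho\colon GL_2\to GL_d$ associated to $W$. Since $\delta$ is the infinitesimal generator and since for a locally finite linear action on a graded algebra in characteristic zero the kernel of the derivation coincides with the fixed subalgebra of the unipotent group (this is the standard equivalence already used for Weitzenb\"ock derivations, and is exactly what underlies Theorem \ref{Weitzenboeck}), we obtain
\[
F(R)^{\delta} = F(R)^{UT_2}.
\]

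Third, I apply Theorem \ref{noncommutative SL- and UT-invariants} with $d=2$ and this $GL_2$-module $W$. The Hilbert series of $W$ is
\[
H(W;x_1,x_2) = \sum_{i=1}^{k}\bigl(x_1^{d_i}+x_1^{d_i-1}x_2+\cdots+x_2^{d_i}\bigr),
\]
so the $a_i$'s in Theorem \ref{noncommutative SL- and UT-invariants} count exactly how many Jordan blocks of $\delta$ produce a given monomial $x_1^{a}x_2^{b}$. The prescribed substitution
\[
x_j \ \longmapsto\ X^{i^{(j)}}t
\]
in $H(F(R);x_1,\ldots,x_d)$ therefore coincides, monomial by monomial, with the substitution defining $f_{\delta}(x_1,x_2,t)$ in the statement. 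Hence $f$ and $f_{\delta}$ are the same function, and the theorem gives
\[
H(F(R)^{UT_2};t)=M'(f_{\delta};1,1,t)=M(f_{\delta};1,1,t),
\]
where the last equality follows because $v_1=x_1$ and $v_2=x_1x_2$ both specialize to $1$ when $x_1=x_2=1$. Combining with $F(R)^{\delta}=F(R)^{UT_2}$ yields the desired formula. Finally, the nice rationality of $H(F(R)^{\delta};t)$ follows from Corollary \ref{nice rational Hilbert series for invariants}, or equivalently by Belov's theorem (as used in Berele \cite{B2}) together with Algorithm \ref{algorithm of Xin}, which turns the nice rational symmetric function $f_{\delta}$ into a nice rational multiplicity series, whose specialization at $x_1=x_2=1$ is rational in $t$ with denominator a product of binomials $1-t^a$.

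The only delicate point, and the main obstacle, is justifying carefully that under the chosen identification the derivation $\delta$ really integrates to the unipotent action of $UT_2\subset GL_2$ on $F(R)$ and not merely on $KY$; this follows from the fact that the $GL_2$-action is by algebra automorphisms of $F(R)$, so its infinitesimal generator is a derivation with the same action on the generators as $\delta$, and any two derivations of $F(R)$ agreeing on a generating set coincide. Once this is established the rest is a direct translation between the language of Weitzenb\"ock derivations and the language of $UT_2$-invariants already set up in Section \ref{section on classical invariant theory}.
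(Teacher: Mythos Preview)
Your proof is correct and follows exactly the approach the paper intends: the authors themselves say the proof ``combines Theorems \ref{Weitzenboeck} and \ref{noncommutative SL- and UT-invariants}'' and ``repeat[s] the main steps of the proofs of these two theorems,'' which is precisely what you do by identifying $KY$ with $W(d_1)\oplus\cdots\oplus W(d_k)$, observing $F(R)^{\delta}=F(R)^{UT_2}$, and then invoking Theorem \ref{noncommutative SL- and UT-invariants} for $d=2$.
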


\begin{corollary}\label{corollary of noncommutative Weitzenboeck}
Let $\delta$ be a Weitzenb\"ock derivation of the relatively free algebra $F(R)$
with Jordan normal form consisting of $k$ cells
of size $d_1+1,\ldots,d_k+1$, respectively. Let us identify the vector space $KY$ spanned by the free generators
of $F(R)$ with the $GL_2$-module
\[
W=W(d_1)\oplus \cdots\oplus W(d_k).
\]
Then the Hilbert series $H(F(R)^{\delta};t)$ and $H(F(R)^{UT_2};t)$ of the algebras of constants $F(R)^{\delta}$ and
of $UT_2$-invariants coincide.
\end{corollary}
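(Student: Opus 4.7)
The strategy is to establish the stronger identity $F(R)^{\delta} = F(R)^{UT_2}$ as graded subalgebras of $F(R)$, from which equality of the Hilbert series is immediate. The key observation is that $UT_2$ is a one-dimensional unipotent subgroup of $GL_2$ whose Lie algebra is spanned by the elementary nilpotent matrix $E_{12}$ with a single $1$ in position $(1,2)$, so that every element of $UT_2$ has the form $\exp(\alpha E_{12})$ for some $\alpha \in K$.

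I would first analyse the action of $E_{12}$ on the $GL_2$-module $W = W(d_1) \oplus \cdots \oplus W(d_k)$ identified with $KY$. By the standard representation theory of $sl_2$, on each irreducible summand $W(d_i)$ of dimension $d_i+1$ the operator $E_{12}$ is nilpotent with a single Jordan block of size $d_i+1$: starting from a highest-weight vector $v \in W(d_i)$, the vectors $v, E_{21}v, \ldots, E_{21}^{d_i}v$ form a basis on which $E_{12}$ acts with nonzero subdiagonal entries. Consequently the nilpotent linear operator $E_{12}|_{KY}$ has Jordan normal form consisting of blocks of sizes $d_1+1, \ldots, d_k+1$, which matches the Jordan structure of $\delta$ exactly. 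Up to a linear change of the free generators $Y$, which induces a graded automorphism of $F(R)$ and does not affect Hilbert series, we may therefore assume that $E_{12}|_{KY} = \delta|_{KY}$.

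Both $\delta$ and the derivation of $F(R)$ obtained by differentiating the one-parameter group $\alpha \mapsto \exp(\alpha E_{12})|_{F(R)}$ at $\alpha=0$ are determined by their restriction to $KY$ (the former by the Leibniz rule, the latter because the $GL_2$-action on $F(R)$ is $g(f(y_1,\ldots,y_p)) = f(g(y_1),\ldots,g(y_p))$), so these two derivations coincide and $\exp(\alpha E_{12})$ acts on $F(R)$ as $\exp(\alpha \delta)$ for every $\alpha \in K$. Since $\mathrm{char}(K)=0$ and $UT_2 \cong (K,+)$ is connected, an element $f \in F(R)$ is $UT_2$-invariant if and only if $\exp(\alpha \delta)(f) = f$ for all $\alpha \in K$, which, by differentiating at $\alpha = 0$, is equivalent to $\delta(f) = 0$. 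Hence $F(R)^{UT_2} = F(R)^{\delta}$ as graded subalgebras, so their Hilbert series agree. The only genuinely non-formal step is the single-Jordan-block claim for $E_{12}$ on each $W(d_i)$; everything else is unwinding definitions together with the standard characteristic-zero equivalence between invariants of a connected unipotent group and kernels of its Lie algebra.
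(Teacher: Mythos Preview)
Your argument is correct and follows the same line as the paper, which omits the proof of this corollary because it repeats the key step already used for Theorem \ref{Weitzenboeck}: the algebra of constants of $\delta$ coincides with the algebra of $UT_2$-invariants once one identifies $KY$ with $W(d_1)\oplus\cdots\oplus W(d_k)$, via the observation (stated just before Theorem \ref{Weitzenboeck}) that $K[Y]^{\delta}$ equals the algebra of invariants of the additive group $K_a$ acting through $\alpha\mapsto\exp(\alpha\delta)$. Your write-up supplies the details the paper leaves implicit, namely that $E_{12}$ has a single Jordan block on each $W(d_i)$ and that local nilpotence plus $\mathrm{char}(K)=0$ makes $\exp(\alpha\delta)(f)=f$ for all $\alpha$ equivalent to $\delta(f)=0$.
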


\begin{example}\label{metabelian constants}
By Example \ref{metabelian algebras} the Hilbert series of the relatively free algebra $F(U_2(K))$ is
\[
H(F(U_2(K));X)=2\prod_{i=1}^d\frac{1}{1-x_i}+((x_1+\cdots+x_d)-1)\prod_{i=1}^d\frac{1}{(1-x_i)^2}.
\]
Let $d=3$ and let $\delta$ be the Weitzenb\"ock derivation with one three-dimensional cell acting on $F(U_2(K))$.
Following the procedure of Theorem \ref{noncommutative Weitzenboeck}, we define the function
\[
f(x_1,x_2,t)=\frac{2}{(1-x_1^2t)(1-x_1x_2t)(1-x_2^2t)}+\frac{(x_1^2+x_1x_2+x_2^2)t-1}{(1-x_1^2t)^2(1-x_1x_2t)^2(1-x_2^2t)^2}.
\]
As in Example \ref{nonlinear metabelian} we compute
\[
M'(f;v_1,v_2)=\frac{1-(v_1^2+v_2)t+(2v_1^2-v_2)v_2t^2+2(v_1^2+v_2)v_2^2t^3-2v_1^2v_2^3t^4}{(1-v_1^2t)^2(1-v_2t)(1-v_2^2t^2)^2},
\]
\[
H(F_3(U_2(K))^{\delta};t)=M'(f;1,1)=\frac{1-2t+t^2+4t^3-2t^4}{(1-t)^3(1-t^2)^2}.
\]
If $d=4$ and $\delta$ is a Weitzenb\"ock derivation with two $2\times 2$ cells, then
\[
f(x_1,x_2,t)=\frac{2}{(1-x_1t)^2(1-x_2t)^2}+\frac{2(x_1+x_2)t-1}{(1-x_1t)^4(1-x_2t)^4},
\]
\[
H(F_4(U_2(K))^{\delta};t)=\frac{1+10t^3+23t^4+2t^5-8t^6+2t^8}{(1-t)^2(1-t^2)^5}.
\]
\end{example}

\begin{example}\label{constants of trace algebras}
As in the case of invariants we can extend the derivations of the generic trace algebra $R_{kp}$ to the pure and mixed
trace algebras $C_{kp}$ and $T_{kp}$.
Let $\delta_{20}$ be the Weitzenb\"ock derivation with a three-dimensional and a one-dimensional Jordan cell
acting on the mixed trace algebra $T_{23}$.
Then Corollary \ref{corollary of noncommutative Weitzenboeck} and Example \ref{invariants of trace algebras}
give that
\[
H(T_{24}^{\delta};t)=H(T_{24}^{UT_2};t)=\frac{(1-t+t^2)(1+3t^2+4t^3+6t^4+4t^5+3t^6+t^8)}{(1-t)^5(1-t^2)^2(1-t^3)^3(1-t^4)^2}.
\]
If $\delta$ has one three-dimensional cell only, then again Example \ref{invariants of trace algebras} gives
\[
H(T_{23}^{\delta};t)=H(T_{23}^{UT_2};t)=\frac{1+2t^2+2t^3+2t^4+t^6}{(1-t)^2(1-t^2)^3(1-t^3)^2(1-t^4)}.
\]
\end{example}

\section*{Acknowledgements}
This project was started when the third named author visited the fifth named at the Department of Mathematics
of the North Carolina State University at Raleigh.
He is grateful to the Department
for the warm hospitality and the creative atmosphere.
He is also very thankful to Leonid Bedratyuk
for the stimulating discussions and suggestions on classical invariant theory
in Section \ref{section on classical invariant theory}.


\begin{thebibliography}{99}

\bibitem{Ab}
A.Sh. Abakarov,
Identities of the algebra of triangular matrices (Russian),
Zap. Nauchn. Semin. Leningr. Otd. Mat. Inst. Steklova {\bf 114} (1982), 7-27.
Translation: J. Sov. Math. {\bf 27} (1984), 2831-2848.

\bibitem{ADF}
G. Almkvist, W. Dicks, E. Formanek,
Hilbert series of fixed free algebras and noncommutative classical
invariant theory,
J. Algebra {\bf 93} (1985), 189-214.

\bibitem{A}
G.E. Andrews,
MacMahon's partition analysis. I:
The lecture Hall partition theorem, in
B.E. Sagan (ed.) et al., Mathematical Essays in Honor of Gian-Carlo Rota's 65th Birthday,
Boston, MA: Birkh\"auser. Prog. Math. {\bf 161}, 1998, 1-22.

\bibitem{AP}
G.E. Andrews, P. Paule,
MacMahon's partition analysis. XII: Plane partitions,
J. Lond. Math. Soc., II. Ser. {\bf 76}, (2007), No. 3, 647-666.

\bibitem{APR1}
G.E. Andrews, P. Paule, A. Riese,
MacMahon's partition analysis: The Omega package,
Eur. J. Comb. {\bf 22} (2001), No. 7, 887-904.

\bibitem{APR2}
G.E. Andrews, P. Paule, A. Riese,
MacMahon's partition analysis. VI: A new reduction algorithm,
Ann. Comb. {\bf 5} (2001), 251-270.

\bibitem{Bd1}
L. Bedratyuk,
Analogue of the Sylvester-Cayley formula for invariants of the ternary form (Ukrainian),
Mat. Visn. Nauk. Tov. Im. Shevchenka {\bf 6} (2009), 50-61.
English version: arXiv: 0806.1920v1 [math.AG].

\bibitem{Bd2}
L. Bedratyuk,
Analogue of the Cayley-Sylvester formula and the Poincar\'e series
for an algebra of invariants of ternary form (Ukrainian),
Ukr. Mat. Zh. {\bf 62} (2010), 1561-1570.
Translation: Ukr. Math. J. {\bf 62} (2010), 1810-1821.

\bibitem{Bd4}
L. Bedratyuk,
The Poincar\'e series of the algebras of simultaneous invariants and covariants of two binary forms,
Linear Multilinear Algebra {\bf 58}, 789-803.

\bibitem{Bd5}
L. Bedratyuk,
Weitzenb\"ock derivations and classical invariant theory:
I. Poincar\'e series,
Serdica Math. J. {\bf 36} (2010), 99-120.

\bibitem{BX}
L. Bedratyuk, G. Xin,
MacMahon partition analysis and the Poincar\'e series of the algebras of invariants
of ternary and quaternary forms,
Linear Multilinear Algebra {\bf 59} (2011), 789-799.

\bibitem{Be}
A.Ya. Belov,
Rationality of Hilbert series of relatively free algebras,
Uspekhi Mat. Nauk {\bf 52} (1997), No. 2, 153-154.
Translation: Russian Math. Surveys {\bf 52} (1997), 394-395.

\bibitem{BBD}
F. Benanti, S. Boumova, V. Drensky,
Multiplicities in the trace cocharacter sequence of three
$3\times 3$ matrices,
in preparation.

\bibitem{B1}
A. Berele,
Approximate multiplicities in the trace cocharacter sequence of two three-by-three matrices,
Comm. Algebra {\bf 25} (1997), 1975-1983.

\bibitem{B2}
A. Berele,
Applications of Belov's theorem to the cocharacter
sequence of p.i. algebras,
J. Algebra {\bf 298} (2006), 208-214.

\bibitem{B3}
A. Berele,
Properties of hook Schur functions with applications to p. i. algebras,
Adv. Appl. Math. {\bf 41} (2008), 52-75.

\bibitem{BR}
A. Berele, A. Regev,
Asymptotic behaviour of codimensions of p. i. algebras satisfying Capelli identities,
Trans. Amer. Math. Soc. {\bf 360} (2008), 5155-5172.

\bibitem{BS}
A. Berele, J.R. Stembridge,
Denominators for the Poincar\'e
series of invariants of small matrices,
Israel J. Math. {\bf 114} (1999), 157-175.

\bibitem{BC}
A.E. Brouwer, A.M. Cohen,
The Poincar\'e series of the polynomial invariants under $SU_2$
in its irreducible representation of degree $\leq 17$,
prerpint, Math. Centrum, Amsterdam, 1979.

\bibitem{BD}
S. Boumova, V. Drensky,
Cocharacters of polynomial identities of upper triangular matrices,
J. Algebra and its Applications (to appear).

\bibitem{Ce}
L. Centrone,
Ordinary and ${\mathbb Z}_2$-graded cocharacters of $UT_2(E)$,
Commun. Algebra {\bf 39} (2011), 2554-2572.

\bibitem{CG}
A. Clebsch, P. Gordan,
Ueber cubische tern\"are Formen,
Math. Ann. {\bf 6} (1873), 436-512.

\bibitem{DEP}
C. De Concini, D. Eisenbud, C. Procesi,
Young diagrams and determinantal varieties,
Invent. Math. {\bf 56} (1980), 129-165.

\bibitem{DeK}
H. Derksen, G. Kemper,
Computational Invariant Theory,
Invariant Theory and Algebraic Transformation Groups, I.
Encyclopaedia of Mathematical Sciences, {\bf 130},
Springer-Verlag, Berlin, 2002.

\bibitem{Di1}
J. Dixmier,
Quelques r\'esultats et conjectures concernant les s\'eries de Poincar\'e des invariants des formes binaires,
S\'emin. d'alg\`ebre P. Dubreil et M.-P. Malliavin, 36\`eme Ann\'ee, Proc., Paris 1983/84,
Lect. Notes Math. {\bf 1146}, 127-160, 1985.

\bibitem{Dj1}
D.\v Z. Djokovi\'c,
A heuristic algorithm for computing the Poincar\'e series of the invariants of binary forms,
Int. J. Contemp. Math. Sci. {\bf 1} (2006), 557-568.

\bibitem{Dj2}
D.\v Z. Djokovi\'c,
Poincar\'e series of some pure and mixed trace algebras
of two generic matrices,
J. Algebra {\bf 309} (2007), 654-671.

\bibitem{DD}
M. Domokos, V. Drensky,
A Hilbert-Nagata theorem in noncommutative invariant theory,
Trans. Amer. Math. Soc. {\bf 350} (1998), 2797-2811.

\bibitem{D1}
V. Drensky,
Codimensions of T-ideals and Hilbert series of relatively  free algebras,
J. Algebra {\bf 91} (1984), 1-17.

\bibitem{D3}
V. Drensky,
Commutative and noncommutative invariant theory,
Math. and Education in Math.,
Proc. of the 24-th Spring Conf. of the Union of Bulgar. Mathematicians,
Svishtov, April 4-7, 1995, Sofia, 1995, 14-50.

\bibitem{D2}
V. Drensky,
Free Algebras and PI-Algebras,
Springer-Verlag, Singapore, 1999.

\bibitem{D4}
V. Drensky,
Invariants of unipotent transformations acting on noetherian relatively free algebras,
Serdica Math. J. {\bf 30} (2004), 395-404.

\bibitem{DF}
V. Drensky, E. Formanek,
Polynomial Identity Rings,
Advanced Courses in Mathematics, CRM Barcelona,
Birkh\"auser Verlag, Basel, 2004.

\bibitem{DG1}
V. Drensky, G.K. Genov,
Multiplicities of Schur functions in invariants of two $3 \times 3$ matrices,
J. Algebra {\bf 264} (2003), 496-519.

\bibitem{DG2}
V. Drensky, G.K. Genov,
Multiplicities of Schur functions with applications to invariant theory and PI-algebras,
C.R. Acad. Bulg. Sci. {\bf 57} (2004), No. 3, 5-10.

\bibitem{DG3}
V. Drensky, G.K. Genov,
Multiplicities in the trace cocharacter sequence of two $4\times 4$ matrices,
Mediterr. J. Math. {\bf 2} (2005), 231-241.

\bibitem{DGV}
V. Drensky, G.K. Genov, A. Valenti,
Multiplicities in the mixed trace cocharacter sequence of two
$3\times 3$ matrices,
Internat. J. Algebra Comput. {\bf 16} (2006), 275-285.

\bibitem{DGu}
V. Drensky, C.K. Gupta,
Constants of Weitzenb\"ock derivations and invariants of unipotent
transformations acting on relatively free algebras,
J. Algebra 292 (2005), 393-428.

\bibitem{DK}
V. Drensky, B. Kostadinov,
Cocharacters of polynomial identities of block triangular matrices,
arXiv: 1112.0792v1 [math.RA].

\bibitem{E}
E.B. Elliott,
On linear homogeneous diophantine equations,
Quart. J. Pure Appl. Math. {\bf 34} (1903), 348-377.

\bibitem{Es}
A. van den Essen,
Polynomial Automorphisms and the Jacobian Conjecture,
Progress in Math. (Boston, Mass.) {\bf 190}, Birkh\"auser, Basel, 2000.

\bibitem{F1}
E. Formanek,
Invariants and the ring of generic matrices,
J. Algebra {\bf 89} (1984), 178-223.

\bibitem{F2}
E. Formanek,
Noncommutative invariant theory,
Contemp. Math. {\bf 43}, 1985, 87-119.

\bibitem{F3}
E. Formanek,
A conjecture of Regev about the Capelli polynomial,
J. Algebra {\bf 109} (1987), 93-114.

\bibitem{F4}
E. Formanek,
The Polynomial Identities and Invariants of $n \times n$ Matrices,
CBMS Regional Conf. Series in Math. {\bf 78},
Published for the Confer. Board of the Math. Sci. Washington DC,
AMS, Providence RI, 1991.

\bibitem{Fr}
G. Freudenburg,
Algebraic Theory of Locally Nilpotent Derivations,
Encyclopaedia of Mathematical Sciences, {\bf 136},
Invariant Theory and Algebraic Transformation Groups, VII.
Springer-Verlag, Berlin, 2006.

\bibitem{GZ}
A. Giambruno, M. Zaicev,
Minimal varieties of algebras of exponential growth,
Adv. Math. {\bf 174} (2003)
310-323.

\bibitem{G}
P. Gordan,
Ueber tern\"are Formen dritten Grades,
Math. Ann. {\bf 1} (1869), 90-128.

\bibitem{KBR}
A. Kanel-Belov, L.H. Rowen,
Polynomial Identities: A Combinatorial Approach,
A.K. Peters, Wellesley, MA, 2005.

\bibitem{Ke}
A.R. Kemer,
Ideals of Identities of Associative Algebras,
Translations of Math. Monographs {\bf 87}, AMS, Providence, RI, 1991.

\bibitem{KS}
O.G. Kharlampovich, M.V. Sapir
Algorithmic problems in varieties,
Intern. J. Algebra and Computation {\bf 5} (1995), 379-602.

\bibitem{K}
B. Kostadinov,
Application of Rational Generating Functions to
Algebras with Polynomial Identities (Bulgarian),
M.Sc. Thesis, University of Sofia, 2011.

\bibitem{LP}
P. Littelmann, C. Procesi,
On the Poincar\'e series of the invariants of binary forms,
J. Algebra {\bf 133} (1990), 490-499.

\bibitem{M}
I.G. Macdonald,
Symmetric Functions and Hall Polynomials,
Oxford Univ. Press (Clarendon), Oxford, 1979, Second Edition, 1995.

\bibitem{MM}
P.A. MacMahon,
Combinatory Analysis, vols. 1 and 2,
Cambridge Univ. Press. 1915, 1916.
Reprinted in one volume:
Chelsea, New York, 1960.

\bibitem{Ma}
Yu.N. Maltsev,
A basis for the identities of the algebra of upper triangular matrices (Russian),
Algebra i Logika {\bf 10} (1971), 393-400.
Translation: Algebra and Logic {\bf 10} (1971), 242-247.

\bibitem{MRZ}
S.P. Mishchenko, A. Regev, M.V. Zaicev,
A characterization of P.I. algebras with bounded multiplicities of the cocharacters,
J. Algebra {\bf 219} (1999), 356-368.

\bibitem{No}
E. Noether,
\"Uber die Bildung des Formensystems der tern\"aren biquadratischen Form,
J. Reine Angew. Math. {\bf 134} (1908), 23-90.

\bibitem{N}
A. Nowicki,
Polynomial Derivations and Their Rings of Constants,
Uniwersytet Mikolaja Kopernika, Torun, 1994.\\
Available at: http://www-users.mat.uni.torun.pl/\~{}anow/polder.html.

\bibitem{O}
N. Onoda,
Linear actions of $G_a$ on polynomial rings,
Proceedings of the 25th Symposium on Ring Theory (Matsumoto, 1992), 11-16,
Okayama Univ., Okayama, 1992.

\bibitem{P}
C. Procesi,
Computing with $2\times 2$ matrices,
J. Algebra {\bf 87} (1984), 342-359.

\bibitem{Ro}
M. Roberts,
The covariants of a binary quantic of the $n$-th degree,
Quart. J. Math. {\bf 4} (1861), 168-178.

\bibitem{Se}
C.S. Seshadri,
On a theorem of Weitzenb\"ock in invariant theory,
J. Math. Kyoto Univ. {\bf 1} (1962), 403-409.

\bibitem{Sh}
T. Shioda,
On the graded ring of invariants of binary octavics,
Amer. J. Math. {\bf 89} (1967), 1022-1046.

\bibitem{Sp}
T.A. Springer,
On the invariant theory of $SU_2$,
Indag. Math. {\bf 42} (1980), 339-345.

\bibitem{S1}
R. Stanley,
Linear homogeneous Diophantine equations and magic labelings of graphs,
Duke Math. J. {\bf 40} (1973), 607-632.

\bibitem{S2}
R. Stanley,
Hilbert functions of graded algebras,
Adv. Math. {\bf 28} (1978), 57-83.

\bibitem{SF1}
J.J. Sylvester, F. Franklin,
Tables of the generating functions and groundforms for the binary quantic of the first ten orders,
Amer. J. Math. {\bf 2} (1879), 223-251.

\bibitem{SF2}
J.J. Sylvester, assisted by F. Franklin,
Tables of the generating functions and groundforms for simultaneous binary quantic of the first four ordres,
taken two and two together,
Amer. J. Math. {\bf 2} (1879), 293-306.

\bibitem{T1}
Y. Teranishi,
The ring of invariants of matrices,
Nagoya Math. J. {\bf 104} (1986), 149-161.

\bibitem{T2}
Y. Teranishi,
Linear Diophantine equations and invariant theory of matrices,
Commutative Algebra and Combinatorics (Kyoto, 1985), 259-275,
Adv. Stud. Pure Math., {\bf 11}, North-Holland, Amsterdam, 1987.

\bibitem{Ty}
A. Tyc, An elementary proof of the Weitzenb\"ock theorem,
Colloq. Math. {\bf 78} (1998), 123-132.

\bibitem{VdB}
M. Van den Bergh,
Explicit rational forms for the Poincar\"e
series of the trace rings of generic matrices,
Isr. J. Math. {\bf 73} (1991), 17-31.

\bibitem{V}
N. Vonessen,
Actions of Linearly Reductive Groups on Affine PI-Algebras,
Mem. Amer. Math. Soc. {\bf No. 414}, 1989.

\bibitem{W}
R. Weitzenb\"ock,
\"Uber die Invarianten von linearen Gruppen,
Acta Math. {\bf 58} (1932), 231-293.

\bibitem{X}
G. Xin,
A fast algorithm for MacMahon's partition analysis,
Electron. J. Comb. {\bf 11} (2004), No. 1, Research paper R58.

\end{thebibliography}
\end{document}